\newtheorem{theorem}{Theorem}
\numberwithin{theorem}{section}
\newtheorem{proposition}[theorem]{Proposition}
\newtheorem{lemma}[theorem]{Lemma}
\newtheorem{corollary}[theorem]{Corollary}
\theoremstyle{remark}
\theoremstyle{definition}
\newtheorem{remark}[theorem]{Remark}
\newtheorem{definition}[theorem]{Definition}
\newtheorem{example}[theorem]{Example}
\numberwithin{equation}{section}
\newcommand\set[1]{\left\{\,#1\,\right\}}		
\newcommand\abs[1]{\left|#1\right|}				
\newcommand\norm[1]{\left\Vert#1\right\Vert}	
\DeclareMathOperator{\id}{id}					
\DeclareMathOperator{\supp}{supp}				
\DeclareMathOperator{\tr}{tr}					
\DeclareMathOperator{\curl}{curl}				
\DeclareMathOperator{\divv}{div}				
\def\R{\mathbb{R}}
\def\T{\mathbb{T}}
\newcommand{\cC}{{\mathcal C}}
\newcommand{\cH}{{\mathcal H}}
\newcommand{\cS}{{\mathcal S}}
\newcommand{\lamax}{\lambda_{\text{max}}}
\newcommand{\lamin}{\lambda_{\text{min}}}						
\begin{document}
\title{On bounded two-dimensional globally dissipative Euler flows}
\author{Bj\"orn Gebhard \quad J\'ozsef J. Kolumb\'an}
\date{}
\maketitle

\begin{abstract}
We examine the two-dimensional Euler equations including the local energy (in)equality as a differential inclusion and show that the associated relaxation essentially reduces to the known relaxation for the Euler equations considered without local energy (im)balance. Concerning bounded solutions we provide a sufficient criterion for a globally dissipative subsolution to induce infinitely many globally dissipative solutions having the same initial data, pressure and dissipation measure as the subsolution. The criterion can easily be verified in the case of a flat vortex sheet giving rise to the Kelvin-Helmholtz instability. 
As another application we show that there exists initial data, for which associated globally dissipative solutions realize every dissipation measure from an open set in $\cC^0(\T^2\times[0,T])$. In fact the set of such initial data is dense in the space of solenoidal $L^2(\T^2;\R^2)$ vector fields.
\end{abstract}
%
%

\section{Introduction}

We consider the homogeneous incompressible Euler equations
\begin{align}\label{eq:euler}
\begin{split}
\partial_t v +\divv v\otimes v +\nabla p&=0,\\
\divv v&=0
\end{split}
\end{align}
on the two-dimensional torus $\T^2$. Here $v:\T^2\times(0,T)\rightarrow\R^2$ is the velocity field and $p:\T^2\times(0,T)\rightarrow\R$ the pressure of the fluid, $T>0$. For $v$ we fix  initial data $v_0:\T^2\rightarrow\R^2$ satisfying $\divv v_0=0$.

Multiplying the momentum balance \eqref{eq:euler} by $v$ it follows that if $(v,p)$ is a sufficiently smooth solution to \eqref{eq:euler}, then the associated local energy density $e:\T^2\times(0,T)\rightarrow\R$, $e(x,t)=\frac{1}{2}\abs{v(x,t)}^2$ satisfies the equation
\begin{equation}\label{eq:energy_pde}
\partial_t e+\divv ((e+p)v)=0,
\end{equation} 
from which one concludes the conservation of the total energy
\begin{equation}\label{eq:total_energy}
E(t):=\int_{\T^2} e(x,t)\:dx.
\end{equation}
In three space dimensions the precise threshold below which the conservation of total energy is violated has already been conjectured by Onsager \cite{Onsager} to be $\cC^{1/3}$. As is famously known this is no longer a conjecture. The conservative part has been shown by Constantin, E, Titi \cite{Constantin_E_Titi}, while counterexamples of class $\cC^{\alpha}$, $\alpha<\frac{1}{3}$ have been constructed by Isett \cite{Isett_Onsager}, and improved to examples with strict dissipation by Buckmaster, De Lellis, Sz\'ekelyhidi, Vicol \cite{Buckmaster_DeL_Sz_Vicol}. These works are relying upon a series of works taking its beginning with the articles of De Lellis and Sz\'ekelyhidi \cite{DeL-Sz-Annals,DeL-Sz-Adm,DeL-Sz-Continuous} who introduced the method of convex integration to the context of fluid dynamics. We refer to the recent surveys \cite{Buckmaster_Vicol_survey,DeL-Sz-Survey} for more details.

Solutions obtained by means of convex integration have not only served as counterexamples, but, due to their highly oscillatory nature, have also been utilized to describe naturally occurring turbulent behaviour in fluids. Examples include turbulence emanating from vortex sheet initial data for the homogeneous Euler equations \cite{Mengual_Sz_vortex_sheet,Sz-KH}, and from the mixture of two different density fluids due to gravity, see \cite{Castro_Cordoba_Faraco,Castro_Faraco_Mengual,Castro_Faraco_Mengual_2,Cordoba,Foerster-Sz,Hitruhin-Lindberg,Mengual,Noisette-Sz,Sz-Muskat} for the incompressible porous media equation as underlying model and \cite{GK,GKSz} for the inhomogeneous Euler equations. The construction of these solutions crucially relies on an explicit relaxation of the differential inclusion associated with the considered partial differential equations.

In the case of the homogeneous or inhomogeneous Euler equations the solutions obtained in \cite{GK,GKSz,Mengual_Sz_vortex_sheet,Sz-KH} enjoy the property of weak admissibility. Meaning that in view of observed anomalous dissipation, as well as weak-strong uniqueness, the conservation of total energy  is relaxed to the condition $E(t)\leq E(0)$, $t\geq 0$, see Section \ref{sec:globally_dissip} below for precise definitions. However, even the stronger condition $E(s)\leq E(t)$, $s\geq t\geq 0$ does not exclude the unphysical case of spatially localized creation of energy getting compensated by dissipation on a different set, such that after integration the total energy is still decaying. In fact, as for example  the present investigation shows, there exist plenty of solutions having this property. 
In order to exclude local creation of energy we therefore add in the present paper the local energy balance \eqref{eq:energy_pde}, or rather its dissipative analogue with $\leq$ instead of equality, to the differential inclusion of the Euler equations and examine its relaxation. 

We point out that it is known that imposing the local energy (in)equality does not recover any well-posedness for the Cauchy-problem. Essentially bounded counterexamples for any space dimension $n\geq 2$ have first been provided by De Lellis and Sz\'ekelyhidi \cite{DeL-Sz-Adm},  see Section \ref{sec:comparison_to_without_energy} for more details. In dimension $3$ the regularity has later been improved to H\"older continuous solutions with any exponent $\alpha <\frac{1}{15}$ by Isett \cite{Isett_globally_dissipative} and with any exponent $\alpha <\frac{1}{7}$ by De Lellis and Kwon \cite{DeL-Kwon}. 

In contrast, the present article does not contribute to the question of best possible regularity. However, it addresses the two-dimensional $L^\infty$ realm and stems its motivation from the use of solutions obtained by convex integration as description for turbulent flows, where as indicated earlier the relaxation of the associated differential inclusion plays an important role. In particular, the present work sheds more light on the importance of convexity in establishing the relaxation, c.f. the calculation of the ($\Lambda$-)convex hull of the associated set of nonlinear constraints in Sections \ref{sec:convex_hull_unconstraint} and \ref{sec:infty}.

In the following subsections we present our results and compare them to the known case when the energy (in)equality is not viewed as part of the differential inclusion. Section \ref{sec:convex_integration} carries out the convex integration in the Tartar framework, where in Section \ref{sec:convex_hull_unconstraint} we first compute the associated $\Lambda$-convex hull in the unconstraint case, and thereafter in Section \ref{sec:infty} introduce a $L^\infty$ bound. Section \ref{sec:density_proof} addresses existence and density of initial data for which induced solutions realize every local dissipation measure from an open set of continuous functions.

\subsection{Globally dissipative solutions}\label{sec:globally_dissip}

We define weak solutions to \eqref{eq:euler} and two different notions of admissibility as follows, see also \cite[Section 2.1]{DeL-Sz-Adm} for an overview of different notions of admissibility.
\begin{definition}\label{def:weaksolseu}
Let $v_0\in L^2(\T^2;\R^2)$ be a weakly divergence-free vector field.
We say that $v\in L^2(\T^2\times (0,T);\R^2)$ is a weak solution to equation \eqref{eq:euler} with initial data $v_0$ if for any test functions $\Phi\in \cC^\infty_c(\T^2\times[0,T);\mathbb{R}^2) $ with $\divv \Phi=0$, $\Psi\in \cC^\infty_c(\T^2\times[0,T))$ we have
\begin{align}\label{eq:weak_form_of_momentum}
\int_0^T\int_{\T^2} \left[ v \cdot \partial_t \Phi + v\otimes v :\nabla\Phi \right] \:dx\:dt +\int_{\T^2}v_0 (x)\cdot \Phi(x,0)\ dx=0,\\\label{eq:weak_divegence_free}
\int_0^T\int_{\T^2} v\cdot\nabla \Psi\:dx\:dt=0.
\end{align}
Moreover, $v$ is called \emph{weakly admissible} provided $v\in \cC^0([0,T);L^2_w(\T^2;\R^2))$ and
\begin{equation}\label{eq:weak_energy_inequality}
E(t):=\int_{\T^2}\abs{v(x,t)}^2\:dx\leq \int_{\T^2}\abs{v_0(x)}^2\:dx\quad \text{for all }t\in[0,T);
\end{equation}
and \emph{globally dissipative}, if $v\in L^3(\T^2\times(0,T))$ and for every nonnegative testfunction $\Psi\in\cC_c^\infty(\T^2\times[0,T),\R_+)$ there holds
\begin{align}\label{eq:local_energy_inequality_weak_version}
\int_0^T\int_{\T^2} \left[|v|^2 \partial_t \Psi + \left(|v|^2+2p\right)v\cdot\nabla\Psi \right] \ dx \ dt +\int_{\T^2} |v_0(x)|^2 \Psi(x,0)\ dx\geq 0.
\end{align}
The apriori only distributional defect in equation \eqref{eq:local_energy_inequality_weak_version}, i.e. 
\[
\mu:=\partial_t\left(\frac{\abs{v}^2}{2}\right)+\divv\left( \left(\frac{\abs{v}^2}{2}+p\right)v\right)\in \big(\cC^\infty_c(\T^2\times[0,T))\big)^*
\]
is called the \emph{energy dissipation measure}.
\end{definition}
\begin{remark}\label{rem:properties_of_weak_solutions}
a) It is well-known that the pressure, which is unique up to a function depending on time only, can be recovered from the notion of weak solution given above, cf. \cite{Temam}. If $v$ is only a weak solution, then $p$ is apriori only a distribution, whereas the additional integrability $v\in L^3(\T^2\times(0,T))$ in the globally dissipative case allows to conclude that $p\in L^{\frac{3}{2}}(\T^2\times (0,T))$ and thus the integral in \eqref{eq:local_energy_inequality_weak_version} is well-defined. Note also that in view of \eqref{eq:weak_divegence_free} a change of $p(x,t)$ to $p(x,t)+\tilde{p}(t)$ does not affect \eqref{eq:local_energy_inequality_weak_version}.

\noindent b) In the present work we will mostly deal with essentially bounded globally dissipative solutions, i.e. $v\in L^\infty(\T^2\times (0,T))$. In that case \cite[Lemma 8]{DeL-Sz-Adm} applied to the the momentum balance \eqref{eq:weak_form_of_momentum} shows that w.l.o.g. $v\in \cC^0([0,T);L^2_w(\T^2;\R^2))$. Moreover, if in addition the dissipation measure $\mu$ is not only a distribution, but a $L^1(\T^2\times(0,T))$ function, then \cite[Lemma 8]{DeL-Sz-Adm} also shows that for any $\varphi\in L^2(\T^2)$ the map $[0,T)\ni t\mapsto \int_{\T^2}\varphi(x)\abs{v(x,t)}^2\:dx\in\R$ can assumed to be continuous. In fact this is not stated in the cited lemma, but the only change in the proof is a new term in \cite[equation (89)]{DeL-Sz-Adm}. Taking in particular $\varphi\equiv 1$ we see that the total energy $E(t)$ is continuous on $[0,T)$. The local energy inequality \eqref{eq:local_energy_inequality_weak_version} then shows $E(t)\leq E(s)$ for all $0\leq s\leq t<T$. Thus $v$ is in particular weakly admissible.

\noindent c) Weakly admissible solutions enjoy the weak-strong uniqueness property \cite{Brenier_DeL_Sz,Wiedemann}.
\end{remark}

\subsection{The associated differential inclusion}\label{sec:differential_inclusion}
Our goal is to provide the relaxation of the above notion of globally dissipative weak solutions when viewed as a differential inclusion. For this purpose we rewrite the Euler equations including the local energy inequality as the linear system 
\begin{align}\label{eq:linear_system_with_p}
\begin{split}
\partial_tv+\divv \sigma +\nabla (e+p)&=0,\\
\divv v&=0,\\
\partial_t e+\divv m &=\mu,
\end{split}
\end{align}
coupled with the following set of nonlinear pointwise constraints:
\begin{align}\label{eq:pointwise_constraints_when_introducing_diff_inclusion}
    \sigma=(v\otimes v)^\circ ,\quad e=\frac{1}{2}\abs{v}^2,\quad m=(e+p)v.
\end{align}
Here the tuple $(v,m,\sigma,e,p)$ takes values in $\R^2\times\R^2\times\cS_0^{2\times 2}\times\R\times\R$, where $\cS_0^{2\times 2}$ denotes the space of traceless symmetric $2$-by-$2$ matrices. Note that in dimension $2$ we can split 
\[
v\otimes v=(v\otimes v)^\circ +\frac{1}{2}\abs{v}^2\id=\sigma +e\id, 
\]
whereas in dimension $d\geq 3$ one would get a factor $\frac{2}{d}$ in front of the $e$ in the first equation of \eqref{eq:linear_system_with_p}. This case is not considered here.

Furthermore, the energy dissipation measure $\mu$ is assumed to be a distribution that can be tested against functions from $\cC_c^\infty(\T^2\times [0,T))$. We consider it to be given and to be negative in the sense that $\mu[\Psi]\leq 0$ for all positive test functions $\Psi\in\cC^\infty_c(\T^2\times[0,T);\R_+)$. The condition $\mu\leq 0$ is not really needed in the investigation, for instance Theorem \ref{thm:main_theorem} below remains true with a general $\mu$, but due to our interest in the physically relevant regime, we exclude any local creation of energy.

System \eqref{eq:linear_system_with_p} typically is accompanied with initial data $(v_0,e_0)$, where $v_0$ is divergence-free.
The following definition contains the precise notion of solution to \eqref{eq:linear_system_with_p}. For simplicity it is formulated in the essentially bounded case. For more generality one would need to adapt the integrability for the different quantities, e.g. $v\in L^3$, $m\in L^1$, $e,\sigma,p\in L^{\frac{3}{2}}$.

\begin{definition}\label{def:weaksolslin}
Let $v_0\in L^\infty(\T^2;\R^2)$ be a weakly divergence-free vector field, $e_0:=\frac{1}{2}\abs{v_0}^2$ and $\mu\in\left(\cC_c^\infty(\T^2\times [0,T))\right)^*$ be negative. 
We say that the tuple $(v,m,\sigma,e) \in L^\infty(\T^2\times(0,T);\R^2\times\R^2\times\cS_0^{2\times 2}\times\R)$ is a weak solution to \eqref{eq:linear_system_with_p} with initial data $(v_0,e_0)$ if for any test functions $\Phi\in \cC^\infty_c(\T^2\times[0,T);\mathbb{R}^2) $ with $\divv \Phi =0$,  $\Psi\in \cC^\infty_c(\T^2\times[0,T)) $ we have
\begin{align*}
\int_0^T\int_{\T^2} \left[ v \cdot \partial_t \Phi + \sigma :\nabla\Phi \right] \ dx \ dt +\int_{\T^2}v_0 (x)\cdot \Phi(x,0)\ dx&=0,\\
\int_0^T\int_{\T^2} v\cdot\nabla \Psi\:dx\:dt&=0,\\
\int_0^T\int_{\T^2} \left[e \partial_t \Psi + m\cdot\nabla\Psi\right] \ dx \ dt +\int_{\T^2} e_0(x) \Psi(x,0)\ dx&=-\mu[\Psi].
\end{align*}
\end{definition}
As in Remark \ref{rem:properties_of_weak_solutions} for weak Euler solutions one can also here recover the pressure $p$ up to a function depending only on time by solving $-\Delta p=\divv \divv \left(\sigma+e\id\right)$. In the here considered case Calderon-Zygmund implies $p\in L^q(\T^2\times (0,T))$ for all $q\in (1,\infty)$. Hence if $(v,m,\sigma,e)$ with induced pressure $p$ is a solution of \eqref{eq:linear_system_with_p} in the sense of Definition \ref{def:weaksolslin} and if \eqref{eq:pointwise_constraints_when_introducing_diff_inclusion} holds pointwise for a.e. $(x,t)\in\T^2\times(0,T)$, then $v$ is a globally dissipative solution of the Euler equations in the sense of Definition \ref{def:weaksolseu}. Observe that also here the non-uniqueness of the pressure does not play a role, since $m(x,t)$ can be changed to $m(x,t)+\tilde{p}(t)v(x,t)$ if needed.

\subsection{A convex integration theorem}

In Section \ref{sec:convex_integration} we will investigate the differential inclusion \eqref{eq:linear_system_with_p}, \eqref{eq:pointwise_constraints_when_introducing_diff_inclusion} and in particular compute its relaxation, see Proposition \ref{prop:hull}. Regarding  essentially bounded solutions we have the following statement. Here $\lamax(M)$ denotes the maximal eigenvalue of $M\in \R^{2\times 2}$ symmetric.

\begin{theorem}\label{thm:main_theorem} Suppose that there exists an $L^\infty$ solution $(v,m,\sigma,e)$ of \eqref{eq:linear_system_with_p} with initial data $(v_0,e_0)$, $2e_0=\abs{v_0}^2$, induced pressure $p$ and negative dissipation measure $\mu\in \left(\cC_c^\infty(\T^2\times [0,T))\right)^*$. Suppose further that to this solution there exists an open set $\mathscr{U}\subset\T^2\times(0,T)$, as well as $\varepsilon>0$, such that $(v,m,\sigma,e)$ is continuous on $\mathscr{U}$, $p\in L
^\infty(\mathscr{U})\cap \cC^0(\mathscr{U})$ and such that there holds
\begin{align}\label{eq:sufficient_condition_for_Linfty_hull}
    \lamax(v\otimes v-\sigma)+\varepsilon\abs{m-(e+p)v}<e\text{ on }\mathscr{U},
\end{align}
as well as \eqref{eq:pointwise_constraints_when_introducing_diff_inclusion} almost everywhere in $\T^2\times(0,T)\setminus\mathscr{U}$.
Then there exist infinitely many globally dissipative weak solutions $v_{sol}\in L^\infty(\T^2\times (0,T))$ of \eqref{eq:euler} with initial data $v_0$, pressure $p$ and dissipation measure $\mu$, i.e. there holds 
\begin{align*}
    \partial_tv_{sol}+\divv\left(v_{sol}\otimes v_{sol}\right)+\nabla p&=0,\\
    \divv v_{sol} &=0,\\
    \partial_t\left(|v_{sol}|^2\right) + \divv\left(\left(|v_{sol}|^2+2p\right)v_{sol}\right)&=2\mu.
\end{align*}
On $\T^2\times(0,T)\setminus \mathscr{U}$ the solutions $v_{sol}$ coincide with $v$. Furthermore, among the infinitely many solutions one can find a sequence $(v_k,m_k,\sigma_k,e_k)$, where $e_k:=\frac{1}{2}\abs{v_k}^2$, $m_k:=(e_k+p)v_k$, $\sigma_k:=(v_k\otimes v_k)^\circ$, such that $v_k\rightharpoonup v$, $m_k\rightharpoonup m$, $\sigma_k\rightharpoonup \sigma$, $e_k\rightharpoonup e$ weakly in $L^2(\T^2\times(0,T))$ as $k\rightarrow \infty$.
\end{theorem}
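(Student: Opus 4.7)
This is a convex-integration statement in the Tartar framework, and its proof rests on the $\Lambda$-convex hull computation of Sections \ref{sec:convex_hull_unconstraint}--\ref{sec:infty} (namely Proposition \ref{prop:hull}, to whose interior the condition \eqref{eq:sufficient_condition_for_Linfty_hull} pins $(v,m,\sigma,e)$ on $\mathscr{U}$). My plan is the classical Baire category scheme producing infinitely many exact solutions inside a residual subset of a suitable space of strict subsolutions.

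I would first define $X_0$ to be the set of $L^\infty$ tuples $\tilde z = (\tilde v, \tilde m, \tilde\sigma, \tilde e)$ that solve the linear system \eqref{eq:linear_system_with_p} in the sense of Definition \ref{def:weaksolslin} with the prescribed initial data $(v_0, e_0)$, dissipation measure $\mu$ and induced pressure $p$, agree with $(v,m,\sigma,e)$ on $(\T^2\times(0,T))\setminus\mathscr{U}$, are continuous on $\mathscr{U}$, obey a uniform $L^\infty$-bound $M$ large enough that the given subsolution lies in $X_0$, and satisfy
\[
\lamax(\tilde v\otimes \tilde v - \tilde\sigma) + \varepsilon\abs{\tilde m - (\tilde e + p)\tilde v} < \tilde e \text{ on }\mathscr{U}.
\]
The hypothesis of the theorem ensures $X_0\neq\emptyset$. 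Let $X$ be the closure of $X_0$ in the (metrizable) weak-$L^2(\T^2\times(0,T))$ topology on the closed $L^\infty$-ball of radius $M$; then $(X, d_w)$ is a nonempty complete metric space, and by the closed characterization of the hull every $\tilde z\in X$ satisfies the corresponding non-strict inequality almost everywhere together with the linear system.

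The heart of the argument is the perturbation property: for every $\tilde z \in X_0$ the continuity on $\mathscr{U}$ and the strict inequality leave a uniform positive gap on a compact subset of $\mathscr{U}$, and the hull computation in Section \ref{sec:infty} exhibits a direction in the wave cone of \eqref{eq:linear_system_with_p} along which one can add a compactly supported oscillatory perturbation $w=(w_v,w_m,w_\sigma,w_e)$ obtained by localizing a plane wave via a suitable cutoff. Since the perturbation is compactly supported inside $\mathscr{U}$, it preserves the initial data, the pressure $p$, the dissipation measure $\mu$ and the linear balance; for sufficiently small amplitude the new tuple $\tilde z + w$ still lies in $X_0$. The standard quantitative estimate derived from the $\Lambda$-hull reads
\[
\int_{\mathscr{U}}\abs{\tilde v + w_v}^2\,dx\,dt - \int_{\mathscr{U}}\abs{\tilde v}^2\,dx\,dt \geq \beta\!\left(\int_{\mathscr{U}}(2\tilde e - \abs{\tilde v}^2)\,dx\,dt\right)
\]
for some function $\beta:[0,\infty)\to[0,\infty)$ strictly positive on $(0,\infty)$. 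The Baire argument then runs as in \cite{DeL-Sz-Adm}: the functional $J(\tilde z) := \int_{\T^2\times(0,T)}\abs{\tilde v}^2$ is the pointwise limit of the weak-$L^2$ continuous functions $\tilde z\mapsto \int \tilde v\cdot(\tilde v * \rho_n)$, hence is Baire-1 on $X$, so its set of continuity points $\mathcal{C}$ is residual in $X$. At every $\tilde z\in \mathcal{C}$ the perturbation estimate forces $\int_{\mathscr{U}}(2\tilde e - \abs{\tilde v}^2)=0$, which combined with the closed-hull inequality yields $\tilde e = \frac{1}{2}\abs{\tilde v}^2$, $\tilde\sigma = (\tilde v\otimes \tilde v)^\circ$, $\tilde m = (\tilde e+p)\tilde v$ pointwise almost everywhere, so $v_{sol}:=\tilde v$ is a globally dissipative Euler solution with the prescribed $v_0$, $p$, $\mu$ that coincides with $v$ outside $\mathscr{U}$. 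Residuality of $\mathcal{C}$ in the infinite-dimensional $X$ produces infinitely many such solutions; the announced sequence $(v_k,m_k,\sigma_k,e_k)$ with $v_k\rightharpoonup v$ in $L^2$ is obtained by iterating the perturbation starting from the given subsolution with amplitudes tending to zero, since each added oscillation is weakly null.

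The main obstacle I expect is the perturbation step: since $p$ is fixed, the linear system \eqref{eq:linear_system_with_p} effectively loses a degree of freedom (the combination $\tilde\sigma+\tilde e\,\id$ is determined modulo gradients compatible with $p$), so the wave cone along which one may perturb is strictly smaller than in the unconstrained Euler differential inclusion. The hull computations of Sections \ref{sec:convex_hull_unconstraint}--\ref{sec:infty} must therefore exhibit admissible plane-wave directions that leave $p$ untouched, and the $L^\infty\cap\cC^0$ regularity of $p$ on $\mathscr{U}$ is exactly what allows the cutoff procedure to be executed without destroying either the strict inequality or the prescribed pressure.
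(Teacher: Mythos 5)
Your overall architecture (Tartar framework, localized plane waves in a pressure-preserving wave cone, Baire category on a weakly closed set of strict subsolutions) is the right one and matches the paper's strategy in spirit, but your implementation has a genuine gap at the point where the nonlinear constraints are recovered in the limit: nothing in your argument enforces $\tilde m=(\tilde e+p)\tilde v$. Your functional is $J(\tilde z)=\int\abs{\tilde v}^2$ and your perturbation estimate gains only in the $v$-component, with a lower bound in terms of the energy defect $\int(2\tilde e-\abs{\tilde v}^2)$. At a continuity point of $J$ this yields $2\tilde e=\abs{\tilde v}^2$ and, via the convex inequality $\lamax(\tilde v\otimes\tilde v-\tilde\sigma)\leq\tilde e$, also $\tilde\sigma=(\tilde v\otimes\tilde v)^\circ$; but the ``closed-hull inequality'' you invoke contains no information whatsoever about $m$ --- this is precisely the content of Proposition \ref{prop:hull} and Remark \ref{rem:full_hull}: the relaxation does not see the constraint $m=(e+p)v$. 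Without it, the third equation of \eqref{eq:linear_system_with_p} only gives $\partial_t(\abs{v_{sol}}^2/2)+\divv m=\mu$ with an unidentified flux $m$, so the limit points need not have dissipation measure $\mu$, which is the whole point of the theorem. Relatedly, your set $X_0$ defined by the fixed-$\varepsilon$ strict inequality is not the natural invariant set: the function $z\mapsto\lamax(v\otimes v-\sigma)+\varepsilon\abs{m-(e+p)v}-e$ is not convex (the term $ev$ is bilinear), so neither the passage to the weak closure nor the claim that admissible perturbations of size comparable to the distance to the constraint set stay in $X_0$ is justified.

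The paper closes exactly this gap by truncating the (unbounded) constraint set to $K_{\gamma,(x,t)}=\set{z\in K_{(x,t)}:e\leq\gamma}$ and proving Proposition \ref{prop:condition_for_being_in_K_gamma_hull}: condition \eqref{eq:sufficient_condition_for_Linfty_hull} implies $z(x,t)\in\text{int}\big(K_{\gamma,(x,t)}^{co}\big)$ for a suitable $\gamma=\gamma_\varepsilon(e)$ --- this is where the $\varepsilon\abs{m-(e+p)v}$ term is actually used, and it is a nontrivial computation (Lemma \ref{lem:segments_of_gamma_hull_1} and the estimates in Section \ref{sec:infty}); your proposal never uses $\varepsilon$ in this way. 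The Baire/perturbation scheme is then run (via Theorem \ref{thm:abstract_CI_theorem}, i.e. the abstract result of \cite{Sz-Muskat} adapted to $(x,t)$-dependent constraints as in \cite{Crippa}, which requires the Hausdorff continuity of $(x,t)\mapsto K_{\gamma,(x,t)}$, Lemma \ref{lem:cont_of_constr}) with perturbations whose size is comparable to the full distance $d\big(z(x,t),K_{\gamma,(x,t)}\big)$ (Corollary \ref{cor:seg}), so that in the limit the distance to $K_{\gamma,(x,t)}$ vanishes a.e.; this is what forces all three constraints in \eqref{eq:pointwise_constraints_when_introducing_diff_inclusion}, including $m=(e+p)v$, and hence the prescribed $p$ and $\mu$ for the solutions. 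To repair your proof you would have to (i) replace your $X_0$ by subsolutions taking values in $\text{int}(K_{\gamma,(x,t)}^{co})$ on $\mathscr{U}$ for a fixed $\gamma$ obtained from Proposition \ref{prop:condition_for_being_in_K_gamma_hull}, and (ii) replace your perturbation estimate and functional by ones controlling the full distance to $K_{\gamma,(x,t)}$ (equivalently, measure the gain in all components of $z$), at which point you have essentially reconstructed the paper's argument.
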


\begin{remark}\label{rem:only_sufficient_condition} a) As usual we refer to $\mathscr{U}$ as the turbulent zone of the solutions.\\
\noindent b) It would be enough to assume the boundedness of $(v,m,\sigma,e)$ on $\mathscr{U}$ and for instance $v\in L^3(\T^2\times(0,T)\setminus \mathscr{U};\R^2)$. Then there still exist globally dissipative solutions as in Theorem \ref{thm:main_theorem} except that they would only be bounded on $\mathscr{U}$.  
\\
\noindent c) We like to emphasize that condition \eqref{eq:sufficient_condition_for_Linfty_hull} is only a sufficient condition for inducing infinitely many bounded globally dissipative solutions. In other words the tuple $(v,m,\sigma,e,p)$ is a subsolution of the system \eqref{eq:linear_system_with_p}, \eqref{eq:pointwise_constraints_when_introducing_diff_inclusion}, but not every subsolution satisfies \eqref{eq:sufficient_condition_for_Linfty_hull}. For more details we refer the reader to Section \ref{sec:convex_integration}, in particular to Propositions \ref{prop:hull}, \ref{prop:condition_for_being_in_K_gamma_hull} and Theorem \ref{thm:abstract_CI_theorem}.\\
\noindent d) For the $v$- and $e$-component the weak $L^2(\T^2\times (0,T))$ convergence of solutions to the subsolution can be improved to $\cC^0([0,T],L^2_w(\T^2))$ convergence, i.e. weak convergence on every time slice, by using in Section \ref{sec:conclusion} below the shifted grid method from \cite{DeL-Sz-Adm}.
\end{remark}

\subsection{Comparison to the relaxation without energy inequality}\label{sec:comparison_to_without_energy}

We recall that the differential inclusion of the Euler system without energy inequality considered in \cite{DeL-Sz-Annals} consisted only of the linear system 
\begin{align}
\begin{split}\label{eq:normal_euler_linear_system}
    \partial_tv+\divv\sigma+\nabla(e+ p)&=0,\\
    \divv v&=0,
\end{split}
\end{align}
(note that $e+p$ can be redefined to a new pressure $q$) 
coupled with the set of nonlinear pointwise constraints
\begin{equation}\label{eq:normal_euler_pointwise}
v\otimes v-\sigma =e\id,
\end{equation}
where in contrast to the present paper the function $e(x,t)$ is a fixed function, prescribing the kinetic energy density of the associated solutions. In order to eliminate unphysical behaviour the given function $e$ is supposed to satisfy 
\begin{align*}
\int_{\T^2} e(x,t) \, dx \leq \int_{\T^2} e(x,0) \, dx\text{ for any }t\geq 0,
\end{align*}
such that corresponding solutions are weakly admissible in the sense of Definition \ref{def:weaksolseu}.

In \cite{DeL-Sz-Adm} it was shown that the relaxation of the pointwise constraints \eqref{eq:normal_euler_pointwise} with respect to \eqref{eq:normal_euler_linear_system} is given by the convex condition
\begin{equation}\label{eq:normal_euler_relaxation}
\lambda_{\max}(v\otimes v-\sigma)<e.
\end{equation}
Tuples $(v,\sigma,p)$ satisfying the linear system \eqref{eq:normal_euler_linear_system} and the pointwise inequality \eqref{eq:normal_euler_relaxation} are called Euler subsolutions with respect to the given energy profile $e(x,t)$. They induce infinitely many Euler solutions with energy density $e(x,t)$. 

The relaxation with respect to a fixed $e(x,t)$ has been used in \cite{DeL-Sz-Adm} to show the earlier mentioned non-uniqueness of globally dissipative solutions in the class of essentially bounded solutions. That the local energy inequality is satisfied for those solutions relies on the special choice of $e$, which is $e(x,t)=\chi_{\Omega'}(x)\hat{e}(t)$, $\chi_{\Omega'}$ denoting the indicator function of an open set $\Omega'\subset\T^2$ and $\hat{e}:[0,T]\rightarrow\R_+$ suitably chosen, see \cite[Section 6.1]{DeL-Sz-Adm}. For a more general $e$, as for example occuring  for vortex sheet initial data, see Section \ref{sec:sheet}, the induced solutions are not automatically globally dissipative. 

In order to overcome this we consider the extended inclusion \eqref{eq:linear_system_with_p}, \eqref{eq:pointwise_constraints_when_introducing_diff_inclusion}. The differences to \eqref{eq:normal_euler_linear_system}, \eqref{eq:normal_euler_pointwise} are that first of all $e$ is no longer a given function and instead also considered as part of the variables. Second the inequality 
\begin{equation}\label{eq:normal_euler_energy_inequ}
\partial_t e+\divv m\leq 0
\end{equation}
is added to \eqref{eq:normal_euler_linear_system}, as well as the constraint
\[
m=(e+p)v
\]
to \eqref{eq:normal_euler_pointwise}. This way we arrive at \eqref{eq:linear_system_with_p}, \eqref{eq:pointwise_constraints_when_introducing_diff_inclusion} except that there the local dissipation is specified by $\mu$. 

As Proposition \ref{prop:hull} will show, it turns out that although the new constraint $m=(e+p)v$ has been added, it is not seen in the pointwise relaxation of the differential inclusion. That is the interior of the $\Lambda$-convex hull associated with \eqref{eq:linear_system_with_p}, \eqref{eq:pointwise_constraints_when_introducing_diff_inclusion} consists of the tuples $(v,m,\sigma,e,p)$ satisfying \eqref{eq:normal_euler_relaxation}, cf. Remark \ref{rem:full_hull}.
Note that this set is clearly unbounded. 
Condition \eqref{eq:sufficient_condition_for_Linfty_hull} in Theorem \ref{thm:main_theorem} has been added in order to apply convex integration in the usual $L
^\infty$-setting. As mentioned earlier more details can be found in Section \ref{sec:convex_integration}.

\subsection{Vanishing viscosity limits}
Globally dissipative solutions occur as vanishing viscosity limits of suitable weak solutions in the sense of Caffarelli, Kohn, Nirenberg \cite{Caffarelli_Kohn_Nirenberg} of the Navier-Stokes equations 
\[
\partial_tv_\nu+\divv(v_\nu\otimes v_\nu)+\nabla p_\nu=\nu \Delta v_\nu,\quad \divv v_\nu=0,
\]
provided the convergence happens to be strong in $L^3(\T^2\times(0,T))$. In the two-dimensional case considered here, the unique Leray-Hopf solutions automatically satisfy the local energy balance
\[
\partial_t e_\nu+\divv\left((e_\nu+p_\nu)v_\nu\right)-\nu \Delta e_\nu=-\nu \abs{\nabla v_\nu}^2,\quad e:=\frac{1}{2}\abs{v_\nu}^2,
\]
see \cite[Proposition 5]{Duchon_Robert} by Duchon, Robert. Therefore they are in particular suitable and any strong $L^3$ limit as $\nu\rightarrow 0$ is a globally dissipative Euler solution. 

If we instead of strong convergence assume that $v_\nu$, $p_\nu$, as well as the components
\[
e_\nu:=\frac{1}{2}\abs{v_\nu}^2,\quad \sigma_\nu:=(v_\nu\otimes v_\nu)^\circ,\quad m_\nu:=(e_\nu+p_\nu)v_\nu,
\]
were to converge weakly to a tuple $(v,m,\sigma,e,p)$ belonging to $L^1$, then the limit satisfies \eqref{eq:normal_euler_linear_system}, \eqref{eq:normal_euler_energy_inequ} and 
\[
\lamax(v\otimes v-\sigma)\leq e\quad a.e. \text{ on }\T^2\times(0,T)
\]
due to convexity. In view of Remark \ref{rem:full_hull} the latter is the only pointwise condition one can deduce from convexity without any further information.

\subsection{The flat vortex sheet}\label{sec:sheet}
In \cite{Sz-KH} Sz\'ekelyhidi has constructed, or rather selected as viscosity solutions of Burgers equation, a family of Euler subsolutions emanating from a flat vortex sheet with initial velocity 
\[
v_0(x)=\begin{cases}
-e_1,&x_2<0,\\
e_1,&x_2>0,
\end{cases}
\]
where $x\in \T^2=\left[-\frac{1}{2},\frac{1}{2}\right]^2/_{\sim}$ and $e_1=(1,0)$. Out of this family the subsolution maximizing the total energy dissipation, see \cite[Remarks]{Sz-KH}, consists of the tuple
\begin{gather*}
    v=\begin{pmatrix}
    \alpha\\0
    \end{pmatrix},\quad \sigma=\begin{pmatrix}
    \frac{\alpha^2}{2} &-\frac{1-\alpha^2}{4}\\
    -\frac{1-\alpha^2}{4} &-\frac{\alpha^2}{2}
    \end{pmatrix},\quad p=\frac{\alpha^2}{2}-e,
\end{gather*}
where the prescribed energy density is given by 
\[
e=\frac{1}{2}- \frac{\delta(1-\alpha^2)}{4},~\delta\in[0,1)
\]
and the function 
$\alpha:\T^2\times[0,\infty)\rightarrow\R$ reads
\[
\alpha(x,t)=\begin{cases}
-1,&x_2<-\frac{t}{2},\\
\frac{x_2}{2t},&-\frac{t}{2}<x_2<\frac{t}{2},\\
1,&x_2>\frac{t}{2}.
\end{cases}
\]

Let us first discuss the consequences of convex integrating without the local energy inequality. Due to \cite{DeL-Sz-Adm} the just stated subsolution induces infinitely many solutions $(v_{sol},p_{sol})$ with local energy density $\frac{1}{2}\abs{v_{sol}}^2=e$ a.e. and pressure $p_{sol}=p$, while the local form of the energy balance 
\[
\partial_t e+\divv ((e+p)v_{sol})=:\mu_{sol}
\]
is not explicitly known for the solutions and could apriori be positive for some $(x,t)$ in the turbulent zone $\set{(x,t):2\abs{x_2}<t}$. Note however that, since the induced solutions can be found arbitrarily close to the subsolution with respect to the weak $L^2$ topology, for selected sequences of solutions there holds
\[
\mu_{sol}[\Psi]\rightarrow \big(\partial_t e+\divv((e+p)v)\big)[\Psi]=\partial_te[\Psi]
\]
as $v_{sol}\rightharpoonup v$ in $L^2(\T^2\times(0,T))$, when tested against a fixed $\Psi\in\cC^\infty_c(\T^2\times[0,T))$. Exploiting this directly in their convex integration strategy Mengual and Sz\'ekelyhidi have constructed in \cite{Mengual_Sz_vortex_sheet} weakly admissible solutions which do not have local energy creation on spatial length scales larger than a beforehand choosable constant, see \cite[Theorem 1.3]{Mengual_Sz_vortex_sheet}. The construction in \cite{Mengual_Sz_vortex_sheet} addresses in fact the more general case of an arbitrary sufficiently regular initial vortex sheet. The used convex integration strategy relies on \cite{Castro_Faraco_Mengual,DeL-Sz-Adm}.
 
Now we turn to convex integration with local energy inequality. Let $v,\sigma,p,e$ be as above and set 
\[
m:=(e+p)v,\quad \mu:=\partial_t e. 
\]
Then using for instance $\varepsilon=1$ the following statement can easily be verified.
\begin{example}\label{ex:flat_KH}
The tuple $(v,m,\sigma,e)$ together with the pressure $p$ and dissipation measure $\mu$ is a globally dissipative subsolution in the sense that it satisfies the conditions of Theorem \ref{thm:main_theorem}.
\end{example}
In consequence we find globally dissipative solutions $(\tilde{v}_{sol},\tilde{p}_{sol}=p)$ with local dissipation measure $\tilde{\mu}_{sol}=\mu$. 
Note on the other hand that  $\abs{\tilde{v}_{sol}}^2=2e$ a.e. does no longer need to hold true, but similar to the previous case for the dissipation measure one still has $\tilde{e}_{sol}\rightharpoonup e$ in $L^2$ (or in $\cC^0([0,T],L^2_w(\T^2))$ cf. Remark \ref{rem:only_sufficient_condition} d)) for suitably chosen sequences. Therefore one could say that in the upgrade from weakly admissible to globally dissipative solutions the local dissipation measure $\mu$ has taken the role of the local energy density.

Finally we remark that the total energy dissipation coincides for both types for solutions, i.e.
\[
\frac{d}{dt}\int_{\T^2}\abs{\tilde{v}_{sol}}^2\:dx=2\int_{\T^2}\mu\:dx=2\int_{\T^2}\partial_t e\:dx=\frac{d}{dt}\int_{\T^2}\abs{v_{sol}}^2\:dx.
\]



\subsection{A density result}
As another application of the relaxation we conclude a result of ``density of wild initial data''-type. Wild initial data is initial data for which the Cauchy problem in the considered regularity and/or admissibility class admits infinitely many solutions. With the onset of convex integration in fluid dynamics it was possible to show that wild initial data is dense in the set of all divergence-free velocity fields, see \cite{Daneri-Runa-Sz,Daneri-Sz,Sz-MPI,Sz-Wiedemann} for weakly admissible solutions of different regularity classes with \cite{Daneri-Runa-Sz} by Daneri, Runa, Sz\'ekelyhidi covering the $\cC^{1/3-}$ case in three space dimensions. The question has also been addressed for weakly admissible solutions of the compressible Euler equations, see \cite{Chen-Vasseur-Yu,Feireisl-Klingenberg-Markfelder}.

In the context of globally dissipative solutions, either incompressible or compressible, so far the only result respecting the local energy (in)equality is the one by Isett \cite{Isett_globally_dissipative}. He shows that the set of divergence-free vector fields inducing infinitely many $\cC^\alpha(I\times \T^3)$, $\alpha<1/15$ solutions with zero dissipation $\mu=0$ on some time interval $I\ni 0$ depending on the initial data is dense in the $\cC^0(\T^3)$ topology.

We complement this result in the 2D case by showing the $L^2$-density of wild initial data for bounded globally dissipative solutions. In addition to the usual wildness, each initial data from the dense set constructed here leads to infinitely many solutions having any arbitrary dissipation measure from a sufficiently small open set in $\cC^0(\T^2\times[0,T])$. Here $T>0$ is uniform. 

More precisely, we fix $T>0$ and for $\delta>0$ we define a set of vector fields $A_\delta$ by saying that $v_0\in A_\delta$ if and only if $v_0\in L^2(\T^2)$, $\divv v_0=0$ and for every $\mu\in \cC^0(\T^2\times[0,T])$ with $-\delta/T<\mu\leq 0$ there exist infinitely many essentially bounded globally dissipative solutions with initial data $v_0$ and dissipation measure $\mu$. Then there holds
\begin{theorem}\label{thm:dense}
The set $A_\delta$ is $\sqrt{20\delta}$-dense in the space of divergence-free $L^2(\T^2;\R^2)$ vector fields. 
\end{theorem}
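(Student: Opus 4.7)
The plan is to show, for every divergence-free $v_0\in L^2(\T^2;\R^2)$, the existence of $\tilde{v}_0\in A_\delta$ with $\|\tilde{v}_0-v_0\|_{L^2}<\sqrt{20\delta}$. By density of smooth fields in the space of divergence-free $L^2$ vector fields, I may assume $v_0$ is smooth, absorbing the approximation error into the $\sqrt{20\delta}$ budget. The construction produces $\tilde{v}_0=v_0+w_0$ with $w_0$ a smooth divergence-free perturbation satisfying $\|w_0\|_{L^2}^2<20\delta$, engineered so that the initial kinetic energy density $\tfrac12|\tilde{v}_0|^2$ is large enough relative to $\delta/T$ to serve as a reservoir that will pay for any admissible dissipation $\mu$.

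For each $\mu\in\cC^0(\T^2\times[0,T])$ with $-\delta/T<\mu\leq 0$, I construct an $L^\infty$ subsolution of \eqref{eq:linear_system_with_p} with initial data $(\tilde{v}_0, \tfrac12|\tilde{v}_0|^2)$, induced pressure $p$ and dissipation measure $\mu$, satisfying \eqref{eq:sufficient_condition_for_Linfty_hull} on $\mathscr{U}=\T^2\times(0,T)$. Writing $\sigma=(v\otimes v)^\circ+\tau$ with $\tau$ traceless symmetric and $e=\tfrac12|v|^2+h$, the momentum balance reduces to an elliptic equation of the form $\divv\tau=\nabla\Phi$, which is always solvable on $\T^2$ since the divergence map from traceless symmetric tensors to mean-zero vector fields is surjective (as one checks in Fourier variables). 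Combining the linear system with $h=e-\tfrac12|v|^2$ yields the identity
\[
\partial_t h=\mu+v\cdot\divv\tau-\divv\bigl(m-(e+p)v\bigr),
\]
which determines how $h$ and $m$ must be coupled, and an explicit ansatz then closes the system with continuous bounded ingredients.

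The main obstacle is maintaining the strict inequality \eqref{eq:sufficient_condition_for_Linfty_hull} uniformly on $\mathscr{U}$. Since $h(\cdot,0)=0$ and $\mu\leq 0$, the naive tendency of $h$ is to decrease, so the required pointwise positivity of $h$ for $t>0$ must be produced through the $v\cdot\divv\tau$ term, forcing $\tau$ (and hence the deviation of $v$ from a true Euler flow) to carry enough amplitude uniformly in $x$. Quantifying how large the initial reservoir $|\tilde{v}_0|^2$ must be in order to sustain such $\tau$ on $[0,T]$ against the depletion by $\mu$ produces the bound $\|w_0\|_{L^2}^2<20\delta$ appearing in the statement. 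Once the subsolution is constructed for each $\mu$, Theorem \ref{thm:main_theorem} furnishes infinitely many essentially bounded globally dissipative Euler solutions with initial data $\tilde{v}_0$, pressure $p$ and dissipation measure $\mu$; hence $\tilde{v}_0\in A_\delta$, and combined with the initial density step this yields the claimed $\sqrt{20\delta}$-density.
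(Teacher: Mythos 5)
Your outline hands off the entire difficulty to the sentence ``an explicit ansatz then closes the system with continuous bounded ingredients'', and that is precisely the step that fails as described. Note first that the hull condition \eqref{eq:sufficient_condition_for_Linfty_hull} forces $h:=e-\tfrac12|v|^2>0$ everywhere on $\mathscr U$, because $v\otimes v-\sigma=\tfrac12|v|^2\id-\tau$ has $\lamax(v\otimes v-\sigma)=\tfrac12|v|^2+\lamax(-\tau)\geq\tfrac12|v|^2$. Now in your ansatz the momentum balance is supposed to ``reduce to $\divv\tau=\nabla\Phi$'', which means the $v$-component of your subsolution is itself an exact Euler flow (with pressure $p+h+\Phi$). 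But then $v\cdot\divv\tau=v\cdot\nabla\Phi=\divv(\Phi v)$, and integrating your identity $\partial_t h=\mu+v\cdot\divv\tau-\divv\bigl(m-(e+p)v\bigr)$ over $\T^2$ gives $\frac{d}{dt}\int_{\T^2}h\,dx=\int_{\T^2}\mu\,dx\leq 0$ with $\int h(\cdot,0)=0$, so $h$ cannot be strictly positive on $\T^2$ for any $t>0$. Hence the ``reservoir'' heuristic cannot work with $v$ kept as (a perturbation of) the Euler evolution of $\tilde v_0$; you would have to construct a genuinely different $v$, and for that no construction, no uniform $\varepsilon$, and no quantification of $\|w_0\|_2^2<20\delta$ is offered. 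A further unaddressed point is that the same initial datum must serve \emph{all} $\mu\in B_{\delta,-}$ simultaneously, which requires $\mu$-independence of the whole initial-time construction, not just of $\tilde v_0$.

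The paper circumvents exactly this obstruction by never writing down an explicit subsolution whose initial datum is an Euler state. In Lemma \ref{lem:in3} it takes a smooth Euler solution starting from a mollification of $w$ and shifts $e\mapsto e+\delta+\int_0^t\mu$, $p\mapsto p-\delta-\int_0^t\mu$: this is a subsolution for every $\mu$, but its initial datum carries an energy surplus $\delta$ (it is \emph{not} an Euler state, consistent with the obstruction above). The surplus at $t=0$ is then removed by an iteration of localized plane-wave perturbations supported in $\T^2\times[0,1/k)$ (Lemmas \ref{lem:in1} and \ref{lem:in2}), chosen independently of $\mu$; the wild initial datum $\bar w$ arises only as the strong $L^2$ limit of this scheme — it is not $v_0$ plus an explicitly prescribed divergence-free field — and the quantitative control $\|w-\bar w\|_2^2\leq 9\int(f-\tfrac12|w|^2)\leq 9\delta$ comes from the bookkeeping of the iteration, combined with the $\delta$ from \eqref{eq:nsclose} via $2\cdot 9\delta+2\delta=20\delta$. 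So your proposal is not a correct alternative route: it is missing the mechanism (perturbations concentrated at $t=0$ and a limiting argument) that the theorem actually needs.
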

As the proof of Theorem \ref{thm:dense} in Section \ref{sec:density_proof} shows, all of the induced solutions have the property that the turbulent zone instantaneously consists of all of $\T^2$, i.e. $\mathscr{U}=\T^2\times(0,T]$.

\section{Convex integration via the Tartar framework}\label{sec:convex_integration}

The Tartar framework, originally introduced in the context of compensated compactness \cite{Tartar}, in combination with a Baire category argument is by now a well-known procedure for convex integration. To prove Theorem \ref{thm:main_theorem} we will use a version for differential inclusions when the set of nonlinear constraints is not constant (c.f. e.g. \cite{Crippa}). 

The differential inclusion has already been introduced in Section \ref{sec:differential_inclusion}. But to be more precise, we only consider $(v,m,\sigma,e)$ as variables while $p$ and $\mu$ are considered as given. That is, we fix a pressure function and a negative dissipation measure
\begin{align}\label{eq:fixed_pressure_and_mu}
 p:\T^2\times (0,T)\rightarrow \R,\quad \mu\in\left(\cC^\infty_0(\T^2\times [0,T)\right)^*,\quad \mu\leq 0,
\end{align}
which will later be pressure and dissipation measure of the subsolution considered in Theorem \ref{thm:main_theorem}. We will then look for $L^\infty$-functions 
\[
z:=(v,m,\sigma,e):\T^2\times(0,T)\to \R^2\times\R^2\times\cS_0^{2\times 2}\times\R=:Z,
\]
such that $z$ is a solution of \eqref{eq:linear_system_with_p} in the sense of Definition \ref{def:weaksolslin}, the induced pressure is $p$ and there holds 
\begin{equation}\label{eq:nonlinear_constraints}
z(x,t)\in K_{(x,t)}:=\set{z\in Z: v\otimes v-\sigma =e\id,\quad m=(e+ p(x,t))v},
\end{equation}
for almost every $(x,t)\in \T^2\times (0,T).$ Indeed, under these conditions $v$ is a bounded globally dissipative solution of the Euler equations in the sense of Definition \ref{def:weaksolseu} having the fixed pressure $p$ and the fixed energy dissipation $\mu$.

The general strategy of convex integration in the Tartar framework relies on the idea that if one can find a weak solution $\hat z$ of \eqref{eq:linear_system_with_p} which instead takes values in the ($\Lambda$-)convex hull, i.e. $\hat{z}(x,t)\in\text{int}(K_{(x,t)}^{co})$, then one may deduce the existence of infinitely many solutions $z$, which are near $\hat{z}$ in the weak sense while satisfying $z(x,t)\in K_{(x,t)}$ a.e., by adding some specially constructed perturbations to $\hat{z}$.

As perturbations we will use plane-wave like solutions to \eqref{eq:linear_system_with_p} which actually do not perturb the pressure or the dissipation rate. That is we consider the system
\begin{align}\label{eq:linear_system}
\begin{split}
\partial_t\bar v+\divv \bar\sigma +\nabla \bar e&=0,\\
\divv \bar v&=0,\\
\partial_t \bar e+\divv \bar m &=0.
\end{split}
\end{align}
Note that if $z$ is a weak solution of \eqref{eq:linear_system_with_p} and $\bar z\in\cC_c^\infty(\T^2\times (0,T))$ solves \eqref{eq:linear_system}, then $z+\bar{z}$ is also a weak solution of \eqref{eq:linear_system_with_p}, with the same pressure $p$ and energy dissipation rate $\mu$. Also the initial data remains unchanged.

\subsection{Localized plane waves}\label{sec:waves}

The wave cone associated with \eqref{eq:linear_system} reads:
\begin{equation}\label{eq:wave_cone}
\Lambda=\set{\bar{z}\in Z:\ker \begin{pmatrix}
\bar{\sigma}+\bar{e}\id & \bar{v}\\
\bar{v}^T & 0\\
\bar{m}^T & \bar{e}
\end{pmatrix} \neq \{0\},\quad (\bar{v},\bar{e})\neq0}.
\end{equation}
Note that for $\bar{z}\in\Lambda$ there exists $\eta\in\R^{3}\setminus\{0\}$ such that every $z(x,t)=\bar{z}h((x,t)\cdot\eta)$, $h\in\cC^1(\R)$ is a solution of \eqref{eq:linear_system}. This allows us to construct solutions which oscillate in the direction $\bar z$. Observe that the condition $(\bar{v},\bar{e})\neq0$ serves to eliminate the degenerate case when the first two components of $\eta$ vanish, i.e. when one would be allowed to oscillate in time only. 

In Lemma \ref{lem:locpw} below we construct localized plane wave-like solutions to \eqref{eq:linear_system}. Here $d$ denotes the Euclidean distance on $Z$.

\begin{lemma}\label{lem:locpw}
There exists $C_0>0$ such that for any $\bar{z}\in\Lambda$, there exists a sequence
$z_N\in \cC_c^\infty(B_1(0);Z)$, $B_1(0)$ being the unit ball in $\R^2\times\R$, 
solving \eqref{eq:linear_system} and satisfying 
\begin{itemize}
\item[(i)] $d(z_N,[-\bar{z},\bar{z}])\to 0$ uniformly,
\item[(ii)] $z_N\rightharpoonup 0$ in $L^2(B_1(0);Z)$,
\item[(iii)] $\int\int |z_N|^2\, dx \, dt\geq C_0|\bar{z}|^2.$
\end{itemize}
\end{lemma}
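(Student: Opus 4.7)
The plan is the standard Tartar-framework construction of localized plane waves: apply a smooth cutoff to a high-frequency oscillation in the direction $\bar{z}$ and then absorb the cutoff errors by realizing the wave as a constant-coefficient differential operator applied to a small potential, so that derivatives falling on the cutoff pay a factor $N^{-1}$.

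\textbf{Setup and potential.} Given $\bar z\in\Lambda$, I fix $\eta=(\xi,\xi_t)\in\R^3\setminus\{0\}$ lying in the kernel of the matrix in \eqref{eq:wave_cone}; after rescaling we may assume $|\eta|=1$. The explicit condition $(\bar v,\bar e)\neq 0$ in \eqref{eq:wave_cone} rules out pure time oscillation and guarantees that the plane wave $y\mapsto \bar z\cos(N\eta\cdot y)$ is a genuine space-time oscillation solving \eqref{eq:linear_system}. Following the strategy developed for the Euler system \eqref{eq:normal_euler_linear_system} in \cite{DeL-Sz-Adm}, one then constructs a constant-coefficient, second-order linear differential operator $\mathcal{L}(D)$ acting on smooth potentials $\phi$ with values in an auxiliary $\R^k$, such that (a) $\mathcal{L}(D)\phi$ solves \eqref{eq:linear_system} for every $\phi$, and (b) for each admissible pair $(\bar z,\eta)$ there is a vector $\bar\Phi=\bar\Phi(\bar z,\eta)$ with $\mathcal{L}(i\eta)\bar\Phi=\bar z$ and $|\bar\Phi|\leq C|\bar z|$, for a constant $C$ independent of $(\bar z,\eta)$. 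The system \eqref{eq:linear_system} augments \eqref{eq:normal_euler_linear_system} by the scalar balance $\partial_t\bar e+\divv \bar m=0$, for which an explicit first-order potential is available and easily combined with the Euler-type potential.

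\textbf{Cutoff and estimates.} Fix $\chi\in\cC^\infty_c(B_1)$ with $0\leq\chi\leq 1$ and $\chi\equiv 1$ on a smaller ball $B_{1/2}$, and define
\[
\phi_N(y):=-\tfrac{1}{N^2}\bar\Phi\cos(N\eta\cdot y),\qquad z_N:=\mathcal{L}(D)\bigl(\chi\,\phi_N\bigr).
\]
Then $z_N\in\cC^\infty_c(B_1;Z)$ and solves \eqref{eq:linear_system} automatically. A Leibniz expansion yields
\[
z_N(y)=\chi(y)\bar z\cos(N\eta\cdot y)+R_N(y),\qquad \|R_N\|_\infty\leq C N^{-1}|\bar z|,
\]
since every term in $R_N$ involves at least one derivative of $\chi$ and correspondingly loses at least one derivative on $\phi_N$, each missing derivative contributing a factor $N^{-1}$ through the amplitude $N^{-2}$ of $\phi_N$. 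Item (i) follows because $|\chi(y)\cos(N\eta\cdot y)|\leq 1$, so the main term lies in $[-\bar z,\bar z]$ while $R_N\to 0$ uniformly. Item (ii) is Riemann--Lebesgue applied to the bounded, highly oscillatory main term together with $\|R_N\|_\infty\to 0$. Item (iii) follows from
\[
\int_{B_1}|z_N|^2\,dy=|\bar z|^2\int_{B_1}\chi^2(y)\cos^2(N\eta\cdot y)\,dy+O(N^{-1})\;\to\;\tfrac12|\bar z|^2\int_{B_1}\chi^2\,dy,
\]
so that for $N$ large enough one may take $C_0:=\tfrac14\int_{B_1}\chi^2\,dy$, independent of $\bar z$ and $\eta$.

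\textbf{Main obstacle.} The delicate point is the simultaneous construction of $\mathcal{L}$ and the \emph{uniform} bound $|\bar\Phi|\leq C|\bar z|$ as $(\bar z,\eta)$ ranges over pairs with $|\eta|=1$ and $\bar z\in\ker\mathcal{A}(\eta)$, where $\mathcal{A}(\eta)$ is the symbol of the first-order differential operator defining \eqref{eq:linear_system}. This bound follows from homogeneity in $\eta$ (which is what legitimizes the rescaling to the unit sphere) together with the fact that $\mathcal{L}(i\eta)$, restricted to a fixed complement of its kernel, is a continuous family of linear isomorphisms onto $\ker\mathcal{A}(\eta)$ over the compact sphere $\{|\eta|=1\}$. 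Arranging the algebra so that the additional components $e$ and $m$ are genuinely captured by the potential, beyond the standard momentum-and-incompressibility pair, is the one point where real care is required; everything else reduces to routine Leibniz and Riemann--Lebesgue computations.
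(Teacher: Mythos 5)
Your overall strategy (realize the oscillation $\bar z\cos(N\eta\cdot y)$ through a potential, multiply the potential by a cutoff so every error term loses a factor $N^{-1}$, then conclude (i)--(iii) by Riemann--Lebesgue) is exactly the paper's, and the localization and lower-bound parts of your argument are correct and routine. The genuine gap is that the heart of the lemma --- the existence of an operator $\mathcal{L}(D)$ with your property (b), i.e.\ that for \emph{every} $\bar z\in\Lambda$ with associated direction $\eta=(\xi,c)$ from \eqref{eq:wave_cone} there is a potential whose image is exactly the plane wave with amplitude $\bar z$, including the new components $m$ and $e$ --- is asserted rather than proven. That assertion is precisely what the paper's proof consists of: it takes the curl-type potential of \cite{DeL-Sz-Annals} for the momentum and divergence equations, observes that then $e=\tfrac12\divv(-W_2,W_1)$ is forced and defines $m:=-\tfrac12\partial_t(-W_2,W_1)$ so that the third equation of \eqref{eq:linear_system} holds automatically, and then computes that with a suitable choice of the amplitude $(A,B,C)\perp(\xi,c)$ the resulting state is $(\bar v,\bar m+\lambda\xi^\perp,\bar\sigma,\bar e)S''$ for some $\lambda\in\R$; the leftover $\lambda\xi^\perp$ in the $m$-slot is removed by the additional potential $(0,\nabla^\perp\theta,0,0)$, and verifying that the mismatch really is parallel to $\xi^\perp$ uses the kernel relations of \eqref{eq:wave_cone} (in particular $\bar m\cdot\xi+c\bar e=0$) together with $\xi\neq0$, which follows from $(\bar v,\bar e)\neq(0,0)$. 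Your remark that the extra scalar balance is ``easily combined with the Euler-type potential'' is exactly where this algebra lives; without it the proof is incomplete, since surjectivity of $\mathcal{L}(i\eta)$ onto the relevant wave states is not a soft fact.

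A second, smaller issue: your justification of the uniform bound $|\bar\Phi|\le C|\bar z|$ --- continuity of $\mathcal{L}(i\eta)$ as a family of isomorphisms onto $\ker\mathcal{A}(\eta)$ over the sphere $|\eta|=1$ --- does not stand as stated, because $\dim\ker\mathcal{A}(\eta)$ jumps (e.g.\ at $\xi=0$) and the admissible directions only satisfy $\xi\neq0$ with no lower bound on $|\xi|/|\eta|$, so a compactness argument needs real care. Fortunately this uniformity is not needed at all: the sequence $z_N$ is allowed to depend on $\bar z$, the leading term $\chi\,\bar z\cos(N\eta\cdot y)$ alone produces the uniform constant $C_0$ in (iii), and the remainder only has to vanish for each fixed $\bar z$. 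So the compactness claim should be dropped, and the explicit potential should be supplied instead, which also renders the (then possibly $\bar z$-dependent) bound on the potential amplitude immediate.
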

\begin{proof} For $x\in\R^2$ denote $x^\perp:=(-x_2,x_1)^T$.
Let $\bar{z}\in\Lambda$. 
By definition,
there exists 
\begin{align}\label{eq:xc}
0\neq(\xi,c)\in\ker \begin{pmatrix}
\bar{\sigma}+\bar{e}\id & \bar{v}\\
\bar{v}^T & 0\\
\bar{m}^T & \bar{e}
\end{pmatrix}.
\end{align}
Furthermore, $\xi\neq 0$, because $\xi=0$ would imply $(\bar{v},\bar{e})=0$. Without loss of generality we will assume that $|\xi|=1$.

Recall from \cite[Remark 2]{DeL-Sz-Annals} that for any smooth function $\omega:\mathbb{R}^{2+1}\to\mathbb{R}^{2+1}$, defining $W:=\curl_{(x,t)}\omega$ and
\begin{align*}
v:=-\frac{1}{2}\nabla^\perp W_3,\quad
\sigma+e\id:=\begin{pmatrix}
\partial_2 W_1 & \frac{1}{2}(\partial_2W_2-\partial_1W_1)\\
\frac{1}{2}(\partial_2W_2-\partial_1W_1) & -\partial_1W_2
\end{pmatrix}
\end{align*}
implies that 
\begin{align*}
\partial_t v+\divv \sigma +\nabla e&=0,\\
\divv v&=0.
\end{align*}

We observe that $e=\frac{1}{2}\divv (-W_2,W_1)$, so if we define $m:=-\frac{1}{2}\partial_t (-W_2,W_1)$, then we have that $D(\omega):=(v,m,\sigma,e)$ solves \eqref{eq:linear_system}.

On the other hand, let $S:\mathbb{R}\to\mathbb{R}$ be a smooth function, $N\geq 1$, and let us define the constants $A,B,C$ as follows:
$$C:=-2|\bar v|\text{sgn}(\xi^\perp\cdot \bar v),\quad (A,B)^T=-cC\xi-2\bar{e}\xi^\perp.$$
If we then consider $W^N$ of the form
$$W^N(x,t)=(A,B,C)\frac{1}{N} S'(N (\xi,c)\cdot (x,t)),$$
it is easy to check that since $(A,B,C)\cdot(\xi,c)=0,$ it follows that $\divv_{(x,t)}W^N=0$, so there exists $\omega_N$ such that $W^N=\curl_{(x,t)}\omega_N.$
Furthermore, simple calculation yields
$$(v,\sigma,e)(x,t)=(\bar v,\bar \sigma,\bar e)S''(N (\xi,c)\cdot (x,t)),$$
as well as
$$m=-\frac{1}{2}\partial_t (-W^N_2,W^N_1)=-\frac{1}{2}c(-B,A) S''(N (\xi,c)\cdot (x,t)).$$
Using $\bar{m}\cdot \xi+c \bar e=0$, we further obtain $-\frac{1}{2}c(-B,A)=\frac{c^2C}{2}\xi^\perp+(\bar{m}\cdot \xi) \xi$, so it follows that
$$D(\omega_N)=\left(\bar{v},\bar{m}+\left(\frac{c^2C}{2}-\bar{m}\cdot \xi^\perp\right) \xi^\perp,\bar{\sigma},\bar{e}\right) S''(N (\xi,c)\cdot (x,t)).$$

However, observe that for any smooth real valued function $\theta:\mathbb{R}^{2+1}\to\mathbb{R}$, $\hat{D}(\theta)=(0,\nabla^\perp\theta,0,0)$ also solves \eqref{eq:linear_system}. 
Therefore, we may consider the potential given by
$$\theta_N(x,t)=-\left(\frac{c^2C}{2}-\bar{m}\cdot \xi^\perp\right) \frac{1}{N}S'(N (\xi,c)\cdot (x,t)),$$
to obtain that
$$\nabla^\perp\theta_N(x,t)=-\left(\frac{c^2C}{2}-\bar{m}\cdot \xi^\perp\right) \xi^\perp S''(N (\xi,c)\cdot (x,t)),$$
and in conclusion,
\begin{align*}
D(\omega_{N})+\hat{D}(\theta_N)=\bar{z} S''(N (\xi,c)\cdot (x,t)).
\end{align*}
In order to conclude the proof of the lemma, it remains to localize this potential in the usual way (e.g. as in \cite{Cordoba,DeL-Sz-Annals}).
One may fix $S(\cdot)=-\cos(\cdot)$ and, for $\varepsilon>0$, consider $\chi_\varepsilon\in \cC_c^\infty(B_1(0))$ satisfying $|\chi_\varepsilon|\leq 1$ on $B_{1}(0)$, $\chi_\varepsilon=1$ on $B_{1-\varepsilon}(0)$. One can then check through simple calculations that 
$z_N=D(\chi_\varepsilon\omega_{N})+\hat{D}(\chi_\varepsilon\theta_N)$ satisfies the conclusions of the lemma.
\end{proof}

\begin{remark}\label{rem:plane_waves_at_time_0}
For later purposes we like to state that there also exists a constant $C_0>0$ independent of $\bar{z}$, such that the constructed sequence $z_N$ at the time slice $t=0$ satisfies $z_N(\cdot,0)\rightharpoonup 0$ in $L^2(B_1(0);Z)$ and $\int_{B_1(0)}z_N(x,0)\:dx\geq C_0\abs{\bar{z}}^2$, where now $B_1(0)\subset\R^2$ is the unit ball in space. Here it is important that $\xi\neq 0$. Moreover, by setting
$z_N=D(\chi_\varepsilon^N\omega_{2^N})+\hat{D}(\chi^N_\varepsilon\theta_{2^N})$ with a suitable cutoff function $\chi_\varepsilon^N$ satisfying $\supp \chi_\varepsilon^N\subset B_1(0)\times\left(-\frac{c}{N},\frac{c}{N}\right)$, $c>0$ we can in addition make sure that the support of $z_N$ is contained in $B_1(0)\times\left(-\frac{c}{N},\frac{c}{N}\right)$.
\end{remark}

\subsection{Perturbing along sufficiently long segments}\label{sec:segments}

In this subsection we prove that the wave cone $\Lambda$ is large with respect to $K_{(x,t)}$, in the sense that any two points in $K_{(x,t)}$ can be connected with a $\Lambda$-segment. Furthermore, this property automatically implies that any point in the interior of the convex hull of any compact subset of $K_{(x,t)}$ can be perturbed along sufficiently long $\Lambda$-segments.

For simplicity of notation, for the rest of the subsection we will fix a point $(x,t)\in\T^2\times (0,T)$ and write $K$ instead of $K_{(x,t)}$.

 
 \begin{lemma}\label{lem:big_fkn_cone}
 For any $z_1,z_2\in K$, $z_1\neq z_2$, we have $\bar{z}=z_2-z_1\in\Lambda$.
 \end{lemma}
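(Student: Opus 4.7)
The plan is to exploit that both endpoints lie in $K=K_{(x,t)}$ with the \emph{same} pressure $p=p(x,t)$, so that the differences simplify dramatically, and then produce an explicit $(\xi,c)$ in the kernel appearing in \eqref{eq:wave_cone}. Writing $z_i=(v_i,m_i,\sigma_i,e_i)$, the constraints $\sigma_i=v_i\otimes v_i-e_i\id$ and $m_i=(e_i+p)v_i$ give at once
\[
\bar\sigma+\bar e\id=v_2\otimes v_2-v_1\otimes v_1,\qquad \bar m=e_2v_2-e_1v_1+p\bar v,
\]
with $\bar z=z_2-z_1=(\bar v,\bar m,\bar\sigma,\bar e)$. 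Thus the three linear equations defining $\Lambda$ read
\[
\bigl(v_2\otimes v_2-v_1\otimes v_1\bigr)\xi+c\bar v=0,\qquad \bar v\cdot\xi=0,\qquad \bar m\cdot\xi+c\bar e=0.
\]
The main step is the following observation: if $\bar v\cdot\xi=0$, then $v_1\cdot\xi=v_2\cdot\xi=:a$, and a short calculation using the two identities above collapses the first and third equation into $(a+c)\bar v=0$ and $(a+c)\bar e=0$ respectively. Hence in every case it suffices to force $\bar v\cdot\xi=0$ and then take $c=-a$.

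I would then split into two cases. First, if $\bar v\neq 0$, set $\xi=\bar v^\perp/|\bar v|$ and $c=-v_1\cdot\xi$; the nondegeneracy condition $(\bar v,\bar e)\neq 0$ is immediate. Second, if $\bar v=0$, the identities force $\bar\sigma=-\bar e\id$ and $\bar m=\bar e\,v_1$. Since $z_1\neq z_2$, not all of $\bar\sigma,\bar e,\bar m$ can vanish, and these relations compel $\bar e\neq 0$; now any nonzero $\xi\in\R^2$ with $c=-v_1\cdot\xi$ solves all three conditions, and $(\bar v,\bar e)=(0,\bar e)\neq 0$.

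I do not anticipate a real obstacle here: the pressure being shared between $z_1$ and $z_2$ is precisely what eliminates $p$ from the final compatibility conditions, and the rank-one/rank-two structure of $v_2\otimes v_2-v_1\otimes v_1$ (which is zero as soon as $v_1=v_2$) is what makes the choice $\xi\perp\bar v$ succeed. The only mildly delicate point is the degenerate case $\bar v=0$, where one has to check by hand that $\bar e\neq 0$ and verify that $(\bar v,\bar e)\neq 0$, so that the vector $\bar z$ is not excluded from $\Lambda$ by the extra nondegeneracy clause in \eqref{eq:wave_cone}.
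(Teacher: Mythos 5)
Your proof is correct and follows essentially the same route as the paper: exploit that the shared pressure $p$ cancels in the differences and exhibit the explicit kernel element $\xi=\bar v^\perp$ (normalized), $c=-v_1\cdot\xi$. The only difference is cosmetic: since each $\sigma_i$ is traceless, membership in $K$ forces $e_i=\tfrac12|v_i|^2$, so $\bar v=0$ would already give $z_1=z_2$; hence your second case is vacuous (the paper simply notes $\bar v\neq 0$ at the outset), and in particular the situation ``$\bar v=0$, $\bar e\neq 0$'' you argue through can never actually occur — though your handling of it is internally consistent and harmless.
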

 \begin{proof}
 Since $z_i\in K$, we have $z_i=(v_i,(e_i+p)v_i,v_i\otimes v_i-e_i\id ,e_i)$, $e_i=\frac{1}{2}\abs{v_i}^2$, $i=1,2$, and therefore $\bar{v}\neq 0$.
  
 If $\bar{e}=0$, then all that needs to be checked is that $\bar{\sigma}\bar{v}^\perp=c\bar{v}$, for some $c\in\R$. There holds
 \begin{align*}
 \bar{\sigma}\bar{v}^\perp=(v_2\otimes v_2-v_1\otimes v_1)\bar{v}^\perp=(v_2\otimes \bar{v}+\bar{v}\otimes v_1)\bar{v}^\perp=(v_1\cdot\bar{v}^\perp)\bar{v},
 \end{align*}
 so $\bar{z}\in\Lambda$ follows.
 
  If $\bar{e}\neq 0$, we similarly obtain from  $z_1,z_2\in K$ that
  $$(\bar{\sigma}+\bar{e}\id)\bar{v}^\perp=(v_2\otimes v_2-v_1\otimes v_1)\bar{v}^\perp=(v_1\cdot\bar{v}^\perp)\bar{v},$$
  so it remains to check that $\bar{m}\cdot\bar{v}^\perp=(v_1\cdot\bar{v}^\perp)\bar{e}$ also holds. We have
  $$\bar{m}\cdot\bar{v}^\perp=(e_2v_2-e_1v_1+p\bar{v})\cdot\bar{v}^\perp=(e_2\bar{v}+\bar{e}v_1)\cdot\bar{v}^\perp=(v_1\cdot\bar{v}^\perp)\bar{e},$$
  the result then follows.
 \end{proof}
 Solely by this property we have the following result.
 \begin{corollary}\label{cor:seg}
 Let $K'\subset K$ be a compact set. For any $z\in\text{int} (K')^{co}$ there exists $\bar{z}\in \Lambda$ such that
 $$[z-\bar z,z+\bar z]\subset \text{int} (K')^{co}\text{ and }|\bar z|\geq\frac{1}{2N}d(z,K'),$$
 where $N=\text{dim}(Z)$ and $d$ is the Euclidean distance on $Z$.
 \end{corollary}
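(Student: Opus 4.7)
The plan is to combine Carath\'eodory's theorem with Lemma~\ref{lem:big_fkn_cone}, which produces $\Lambda$-directions from pairs of distinct points of $K$. The trivial case $z\in K'$ is disposed of by taking any sufficiently small $\bar z\in\Lambda$; $\Lambda$ is nonempty, since $\text{int}(K')^{co}\neq\emptyset$ forces $K'$ to contain at least two distinct points. Assume now $z\notin K'$, set $r:=d(z,K')>0$, and use Carath\'eodory to write $z=\sum_{i=1}^{M}\lambda_i z_i$ with $z_i\in K'$, $\lambda_i>0$, $\sum_i\lambda_i=1$ and $M\le N+1$.

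The crucial choice is to take $i^*$ as the index with the \emph{largest} weight $\lambda_{i^*}$, rather than the naive argmax of $|z-z_i|$. Since $z_{i^*}\in K'$ we have $|z-z_{i^*}|\ge r$, and the identity $z-z_{i^*}=\sum_{j\neq i^*}\lambda_j(z_j-z_{i^*})$ combined with the triangle inequality yields an index $j^*\neq i^*$ such that
\[
\lambda_{j^*}|z_{j^*}-z_{i^*}|\;\ge\;\frac{|z-z_{i^*}|}{M-1}\;\ge\;\frac{r}{N}.
\]
In particular $z_{i^*}\neq z_{j^*}$, so Lemma~\ref{lem:big_fkn_cone} furnishes $z_{i^*}-z_{j^*}\in\Lambda$, and I would set
\[
\bar z:=\tfrac{\lambda_{j^*}}{2}(z_{i^*}-z_{j^*})\in\Lambda,
\]
which immediately yields the size bound $|\bar z|\ge r/(2(M-1))\ge r/(2N)$.

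For the segment inclusion I would rewrite
\[
z\pm\bar z\;=\;\Bigl(\lambda_{i^*}\pm\tfrac{\lambda_{j^*}}{2}\Bigr)z_{i^*}+\Bigl(\lambda_{j^*}\mp\tfrac{\lambda_{j^*}}{2}\Bigr)z_{j^*}+\sum_{k\neq i^*,j^*}\lambda_k z_k
\]
and observe that the coefficients are nonnegative and sum to $1$; the only nontrivial inequality, $\lambda_{i^*}-\lambda_{j^*}/2\ge 0$, holds by maximality of $\lambda_{i^*}$. The same calculation carried out with $\lambda_{j^*}$ in place of $\lambda_{j^*}/2$ shows moreover that $z\pm 2\bar z\in(K')^{co}$, so each endpoint $z\pm\bar z=\tfrac{1}{2}z+\tfrac{1}{2}(z\pm 2\bar z)$ is a strict midpoint between the interior point $z$ and a point of $(K')^{co}$, hence lies in $\text{int}(K')^{co}$. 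The full segment then sits in the interior by convexity of the interior.

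The main obstacle is precisely the coupling between the size of $\bar z$ and the sign of $\lambda_{i^*}-\lambda_{j^*}/2$: the size bound asks $\lambda_{j^*}|z_{j^*}-z_{i^*}|$ to be large, while the segment inclusion asks $\lambda_{i^*}$ not to be too small relative to $\lambda_{j^*}$. Fixing $i^*$ as the argmax of $\lambda_i$ decouples these two conditions, making the sign condition automatic while leaving the triangle-inequality argument free to extract a magnitude proportional to $1/(M-1)\ge 1/N$.
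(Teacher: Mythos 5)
Your argument is correct and is precisely the standard proof the paper points to (it omits the details, citing Lemma 6 of \cite{DeL-Sz-Adm} and \cite[Lemma 4.9]{GKSz}): Carath\'eodory's theorem, the choice of the largest-weight index combined with the triangle inequality to extract a direction of size at least $d(z,K')/(2N)$, and Lemma \ref{lem:big_fkn_cone} to see that this direction lies in $\Lambda$. The interior/endpoint bookkeeping via $z\pm 2\bar z\in (K')^{co}$ is also the usual one, so nothing further is needed.
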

 The proof is the same as those of Lemma 6 from \cite[Lemma 6]{DeL-Sz-Adm}, respectively \cite[Lemma 4.9]{GKSz}, relying on Carath\'eodory's theorem and Lemma \ref{lem:big_fkn_cone} above, therefore we omit it.

\subsection{The convex hull}\label{sec:convex_hull_unconstraint}

We now explicitly compute the full $\Lambda$-convex hull associated with our differential inclusion, which turns out to coincide with the usual convex hull. Moreover, Proposition \ref{prop:hull} shows that relaxing the energy inequality does not give a new condition in the hull compared to the case of the Euler equations with prescribed energy function known from \cite{DeL-Sz-Adm}. 
The definition of the $\Lambda$-convex hull $(K')^\Lambda$ of $K'\subset Z$ can be recalled for example from \cite{Kirchheim,Matousek_1998}: we say that $z\in (K')^\Lambda$ if and only if, for all $\Lambda$-convex functions $f:Z\rightarrow\R$, there holds $f(z)\leq \sup_{z'\in K'}f(z')$.

Recall also that $(K')^\Lambda$ is closed provided that the linear span of $\Lambda$ is all of $Z$, \cite[Corollary 2.4]{Matousek_1998}. In our case the latter property follows from $(\bar{v},0,0,0)\in \Lambda$, $(\bar{v},\bar{v},0,0)\in\Lambda$ and $(0,0,\bar{\sigma},\pm\lamax(\bar{\sigma}))\in \Lambda$ for any $\bar{v}\neq 0$, $\bar{\sigma}\neq 0$. 
\begin{proposition}\label{prop:hull}
Independently of $p$ and $(x,t)$ there holds
\begin{equation}\label{eq:computation_of_full_hull}
K_{(x,t)}^\Lambda=K_{(x,t)}^{co}=\set{z\in Z:\lambda_{max}(v\otimes v-\sigma)\leq e}=:\overline{U}.
\end{equation}
\end{proposition}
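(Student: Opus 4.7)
The plan is to prove the chain $K_{(x,t)}^{\Lambda}\subseteq K_{(x,t)}^{co}\subseteq\overline{U}\subseteq K_{(x,t)}^{\Lambda}$. The first inclusion is automatic since every convex function is $\Lambda$-convex. For $K^{co}\subseteq\overline{U}$ I would verify that $\overline{U}$ is a closed convex set containing $K_{(x,t)}$. The containment is immediate: for $z\in K_{(x,t)}$ one has $v\otimes v-\sigma=e\id$, whose two eigenvalues both equal $e$, so $\lamax(v\otimes v-\sigma)=e$. Convexity of $\overline{U}$ follows from the identity
\[
\lamax(v\otimes v-\sigma)=\sup_{|\xi|=1}\bigl((v\cdot\xi)^{2}-\xi\cdot\sigma\xi\bigr),
\]
which exhibits the relevant function as a supremum of maps that are convex in $v$ and linear in $\sigma$.

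The substantive inclusion is $\overline{U}\subseteq K^{\Lambda}$. My plan is to first reduce to the Euler case via the projection $\pi(v,m,\sigma,e)=(v,\sigma,e)$, and then correct the $m$-coordinate by an extra lamination. By the Euler relaxation of \cite{DeL-Sz-Adm} (applied level by level in $e$ and combined across $e$) one has $\pi(K_{(x,t)})^{\Lambda_{\mathrm{Eu}}}=\set{(v,\sigma,e):\lamax(v\otimes v-\sigma)\leq e}=\pi(\overline{U})$. Hence every $z^{\star}\in\overline{U}$ admits an iterated $\Lambda_{\mathrm{Eu}}$-lamination of $\pi(z^{\star})$ by $\pi(K)$-atoms. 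I would lift this lamination to $Z$ step by step: Lemma \ref{lem:big_fkn_cone} makes the first-level secants between $K$-atoms automatically lie in $\Lambda$, and at subsequent levels the one-parameter freedom in the $\bar m$-component at a given frequency (described next) lets me pick a $\Lambda$-lifted secant. The output is some $\hat z=(v^{\star},\hat m,\sigma^{\star},e^{\star})\in K^{\Lambda}$ with generally $\hat m\neq m^{\star}$.

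To move from $\hat m$ to $m^{\star}$ while keeping $(v,\sigma,e)$ fixed, I exploit the wave cone \eqref{eq:wave_cone}. At any fixed frequency $\eta=(\xi,c)$ the three defining equations form a linear subspace, and the third reads $\bar m\cdot\xi+c\bar e=0$, so the component $\bar m\cdot\xi^{\perp}$ is unconstrained. Choosing $\bar z^{(1)},\bar z^{(2)}\in\Lambda$ with identical frequency and identical $(\bar v,\bar\sigma,\bar e)$ but with $\bar m^{(i)}$ differing by a multiple of $\xi^{\perp}$, linearity at fixed $\eta$ gives $\bar z^{(1)}+\bar z^{(2)}\in\Lambda$. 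Consequently the $\Lambda$-midpoint of $\hat z+\bar z^{(1)}$ and $\hat z-\bar z^{(2)}$ equals $\hat z+\tfrac{1}{2}\bigl(0,\bar m^{(1)}-\bar m^{(2)},0,0\bigr)$, a pure $\xi^{\perp}$-shift of the $m$-slot. Two linearly independent choices of $\xi$ together span $\R^{2}$, so this move reaches any target $m^{\star}$.

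The main obstacle I anticipate is ensuring that every intermediate point produced during both the lifted Euler lamination and the $m$-correction lies in $K^{\Lambda}$, not merely in $\overline{U}$. For $z^{\star}\in\mathrm{int}(\overline{U})$ the strict defining inequality leaves room to execute all lamination steps with perturbations small enough that every intermediate laminate still satisfies $\lamax(v\otimes v-\sigma)<e$ and can itself be expressed recursively via the same construction. The boundary case $z^{\star}\in\partial\overline{U}$ is then handled by approximation together with the closedness of $K^{\Lambda}$, which holds since $\spann\Lambda=Z$, as recorded just above the proposition.
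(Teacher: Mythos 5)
Your framing --- the easy inclusions $K^{\Lambda}\subseteq K^{co}\subseteq\overline{U}$, reduction to the interior, and closedness of $K^\Lambda$ from $\spann\Lambda=Z$ --- matches the paper, but your core argument for $\text{int}(\overline{U})\subseteq K^\Lambda$ has two gaps. First, lifting the Euler laminate to $Z$ is not automatic. The ``one-parameter freedom'' in $\bar m$ is a freedom in \emph{choosing} a $\Lambda$-direction at a given frequency, not in retrofitting one: once you reach a second-level secant, the two first-level laminates $\hat z_1,\hat z_2$ already carry fixed $\hat m_i$ (inherited from the $K$-atoms), the admissible frequency $(\xi,c)$ is essentially determined by the $(v,\sigma,e)$-part of $\hat z_1-\hat z_2$ lying in the Euler wave cone, and the additional compatibility $(\hat m_1-\hat m_2)\cdot\xi+c(\hat e_1-\hat e_2)=0$ is then a genuine constraint with no free parameter left. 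Lemma \ref{lem:big_fkn_cone} only resolves the very first level.

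Second, the $m$-correction is circular as written. The endpoints $\hat z\pm\bar z^{(i)}$ differ from $\hat z$ not just in $m$ but also by $\pm(\bar v,\bar\sigma,\bar e)$ with $(\bar v,\bar e)\neq 0$, forced by the wave-cone condition in \eqref{eq:wave_cone}; so their membership in $K^\Lambda$ is exactly what is being proven, and the proposed ``recursion'' spawns fresh endpoints at each call with no identified decreasing quantity and no base case. Moreover, reaching an arbitrary $m^\star$ requires accumulated $\bar m$-shifts of arbitrary size, so a purely local smallness argument cannot close it. The paper bypasses both issues by constructing $\Lambda$-segments in closed form and stopping at two lamination levels: Lemma \ref{lem:segments_of_full_hull_1}, with the direction \eqref{eq:definition_of_first_Lambda_direction} ($\bar e=1$), places the slice $\{M(z)=0,\ 2e>|v|^2,\ m\neq(e+p)v\}$ into $K^{\Lambda,1}$; Lemma \ref{lem:segments_of_full_hull_2}, with the direction \eqref{eq:definition_of_second_Lambda_segments} ($\bar e=0$, $m-(e+p)v$ preserved along the segment), reaches that slice from any point with $\lamax(v\otimes v-\sigma)<e$ and $m\neq(e+p)v$; a density and closure step finishes. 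The eigenvalue identity \eqref{eq:M_v_identity} is the decisive check that the second segment's endpoints are admissible, and your plan has no analogue of it --- to make your projection-and-lift strategy rigorous you would in effect have to rediscover that computation, which removes the advantage you were hoping to gain over the direct approach.
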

\begin{proof}
It is clear that the right-hand side is a convex set, cf. \cite[Lemma 3(i)]{DeL-Sz-Adm}, containing $K_{(x,t)}$. Hence $K_{(x,t)}^\Lambda\subset K_{(x,t)}^{co}\subset\overline{U}$. For the other inclusion observe that we can not use the Krein-Milman theorem for $\Lambda$-convex sets, \cite[Lemma 4.16]{Kirchheim}, since $K_{(x,t)}$ is not compact. We instead conclude the statement by direct computation of $\Lambda$-segments, which is carried out in Lemmas \ref{lem:segments_of_full_hull_1}, \ref{lem:segments_of_full_hull_2} below. Indeed taking  then the closure of the set specified in Lemma \ref{lem:segments_of_full_hull_2} shows that $\overline{U}\subset K_{(x,t)}^\Lambda$.
\end{proof}
For the rest of this subsection we will drop the $(x,t)$ dependence in the notation. Let 
\[
K^{\Lambda,1}:=K\cup \set{sz_1+(1-s)z_2:z_1,z_2\in K,~s\in[0,1],~z_1-z_2\in\Lambda}
\]
and  $K^{\Lambda,j}:=\left(K^{\Lambda,j-1}\right)^{\Lambda,1}$, $j\geq 2$. By the definition of $K^\Lambda$ one clearly has $K^{\Lambda,j}\subset K^\Lambda$ for any $j\geq 1$. 

Furthermore, we define for $z\in Z$ with $m\neq (e+p)v$ the vector
\[
\eta(z):=\frac{m-(e+p)v}{\abs{m-(e+p)v}},
\]
as well as the traceless matrix
\[
M(z):=v\otimes v-\sigma -e\id +(2e-\abs{v}^2)\eta(z)\otimes\eta(z).
\]
\begin{lemma}\label{lem:segments_of_full_hull_1}
Every $z\in Z$ satisfying $m\neq (e+p)v$, $2e-\abs{v}^2>0$, $M(z)=0$ is an element of $K^{\Lambda,1}$.
\end{lemma}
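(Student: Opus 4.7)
The strategy is to write $z$ explicitly as a $\Lambda$-convex combination of two distinct points $z_1, z_2 \in K$. By Lemma~\ref{lem:big_fkn_cone} the difference of any two distinct points of $K$ already lies in $\Lambda$, so once such a splitting is produced we are done. Since a point of $K$ is determined by its velocity component (via $e_i = \tfrac{1}{2}|v_i|^2$, $\sigma_i = v_i\otimes v_i - e_i\,\id$, $m_i = (e_i+p)v_i$), the split is controlled entirely by a choice of $v_1, v_2$ and $s \in (0,1)$. I would parameterize $v_1 = v + (1-s)\bar v$, $v_2 = v - s\bar v$, so that $sv_1 + (1-s)v_2 = v$ holds automatically and $\bar v := v_1 - v_2 \in \R^2$ is the free direction.

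A direct computation then gives
\[
s\sigma_1 + (1-s)\sigma_2 + \bigl(s e_1 + (1-s)e_2\bigr)\,\id \;=\; v\otimes v + s(1-s)\,\bar v\otimes \bar v,
\]
together with $sm_1 + (1-s)m_2 - (e+p)v = s(1-s)(e_1-e_2)\,\bar v$ where $e_1 - e_2 = v\cdot\bar v + \tfrac{1-2s}{2}|\bar v|^2$. Equating these to $\sigma + e\,\id$ and $m$ and substituting the hypothesis $M(z)=0$ in the form $\sigma + e\,\id = v\otimes v + (2e-|v|^2)\,\eta\otimes\eta$ with $\eta := \eta(z)$, one is led to the rank-one identity $s(1-s)\,\bar v\otimes\bar v = (2e-|v|^2)\,\eta\otimes\eta$. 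This forces $\bar v = \lambda\,\eta$ for some $\lambda \in \R$ with $s(1-s)\lambda^2 = 2e-|v|^2$, and the remaining equation from the $m$-component becomes the scalar identity $s(1-s)(e_1-e_2)\lambda = |m-(e+p)v|$, where now $e_1 - e_2 = \lambda(v\cdot\eta) + \tfrac{1-2s}{2}\lambda^2$.

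Writing $u := 1-2s$, so that $s(1-s) = (1-u^2)/4$, and setting $A := 2e-|v|^2 > 0$ and $B := |m - (e+p)v| - A\,(v\cdot\eta)$, the two scalar equations reduce to $\lambda^2(1-u^2) = 4A$ and $\lambda\, u = 2B/A$. Eliminating $\lambda$ yields
\[
u^2 \;=\; \frac{B^2}{A^3 + B^2} \;\in\; [0,1),
\]
which admits a solution $u \in (-1,1)$, determining $s = (1-u)/2 \in (0,1)$ and a nonzero $\lambda$ (in the degenerate case $B = 0$ one takes $\lambda = \pm 2\sqrt{A}$, which is consistent since $m \neq (e+p)v$ combined with $B=0$ forces $v\cdot\eta > 0$). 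The corresponding $z_1, z_2$ belong to $K$ and are distinct because $\bar v = \lambda\eta \neq 0$, so Lemma~\ref{lem:big_fkn_cone} yields $z_1 - z_2 \in \Lambda$ and hence $z = sz_1 + (1-s)z_2 \in K^{\Lambda,1}$. The main delicacy is algebraic: it is precisely the hypothesis $M(z)=0$ that collapses the two matrix constraints into a single rank-one relation, forcing $\bar v \parallel \eta$ and thereby reducing the entire splitting problem to the scalar equation above.
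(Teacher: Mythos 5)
Your proposal is correct and follows essentially the same route as the paper: both arguments realize $z$ as a convex combination of two points of $K$ whose velocity difference is parallel to $\eta(z)$, invoke Lemma~\ref{lem:big_fkn_cone} to see the difference lies in $\Lambda$, and reduce the matching conditions (via $M(z)=0$ and $2e-\abs{v}^2>0$) to solvable scalar equations. The paper fixes a direction $\bar z$ with $\bar e=1$ and finds roots $s_1<0<s_2$ of a quadratic, whereas you solve for the weight $s$ and the offset $\lambda$ directly — an equivalent reparameterization of the same computation.
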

\begin{proof}
Let $z$ be as stated and define 
\begin{gather}
\begin{gathered}\label{eq:definition_of_first_Lambda_direction}
\bar{e}=1,\quad \bar{v}=\frac{2e-\abs{v}^2}{\abs{m-(e+p)v}}\eta(z), \quad \beta=\frac{1-v\cdot\bar{v}}{\abs{\bar{v}}^2}\\
\bar{m}=v+\bar{v}(e+p+2\beta),\quad\bar{\sigma}=\big[v\otimes \bar{v}+\bar{v}\otimes v+2\beta\bar{v}\otimes \bar{v}\big]^\circ,
\end{gathered}
\end{gather}
where $[S]^\circ$ is the trace-free part of $S\in \R^{2\times 2}$. We will show that there exists $s_1<0<s_2$, such that $z+s_{1,2}\bar{z}\in K$. In consequence Lemma \ref{lem:big_fkn_cone} implies $\bar{z}\in \Lambda$ and thus $z\in K^{\Lambda,1}$.

Now for $z+s\bar{z}\in K$ we on one hand need to satisfy 
\begin{align*}
0=m+s\bar{m}-(e+p+s)(v+s\bar{v})=m-(e+p)v+s(\bar{m}-v-(e+p)\bar{v})-s^2\bar{v},
\end{align*}
which by the definition of $\bar{v},\bar{m}$ is equivalent to
\begin{align}\label{eq:equation_for_roots_of_first_order_segments}
s^2-2\beta s-\frac{\abs{m-(e+p)v}^2}{2e-\abs{v}^2}=0.
\end{align}
Since $2e-\abs{v}^2>0$ by assumption, this equation has two solutions $s_1,s_2$ with different signs. For $s=s_{1,2}$ we on the other hand also need to satisfy 
\begin{align}
\begin{split}\label{eq:vanishing_of_matrix_in_1st_segments}
0&=(v+s\bar{v})\otimes(v+s\bar{v})-(\sigma+s\bar{\sigma})-(e+s)\id\\
&=s^2\bar{v}\otimes\bar{v}+s(\bar{v}\otimes v+v\otimes \bar{v}-\bar{\sigma}-\id)+v\otimes v-\sigma-e\id\\
&=\left(s^2-2\beta s -\frac{\abs{m-(e+p)v}^2}{2e-\abs{v}^2}\right)\bar{v}\otimes\bar{v},
\end{split}
\end{align}
which holds due to \eqref{eq:equation_for_roots_of_first_order_segments}. In the last step we used the definition of $\bar{v}$, $\bar{\sigma}$ and $\beta$, as well as the assumption $M(z)=0$.
\end{proof}

\begin{lemma}\label{lem:segments_of_full_hull_2}
If $z\in Z$ satisfies $m\neq (e+p)v$ and $\lambda_{\max}(v\otimes v-\sigma)<e$, then $z\in K^{\Lambda,2}$.
\end{lemma}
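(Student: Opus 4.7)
The plan is to realize $z$ as a $\Lambda$-convex combination of two points in $K^{\Lambda,1}$, each reached via Lemma \ref{lem:segments_of_full_hull_1}. Concretely, I seek $\bar{z}\in\Lambda$ and scalars $s_-<0<s_+$ such that the two endpoints $z_\pm:=z+s_\pm\bar{z}$ both satisfy the three hypotheses of Lemma \ref{lem:segments_of_full_hull_1}: $m_\pm\neq(e_\pm+p)v_\pm$, $2e_\pm>\abs{v_\pm}^2$, and $M(z_\pm)=0$. That lemma then places $z_\pm$ in $K^{\Lambda,1}$, and since $\bar z$ is by construction a wave-cone direction, the representation $z=\alpha z_++(1-\alpha)z_-$ with $\alpha=-s_-/(s_+-s_-)\in(0,1)$ exhibits $z$ as a $\Lambda$-convex combination, giving $z\in K^{\Lambda,2}$.

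The core task is solving $M(z+s\bar{z})=0$ at two values of $s$ of opposite sign. The hypothesis $\lamax(v\otimes v-\sigma)<e$ forces $A:=v\otimes v-\sigma-e\id$ to be strictly negative definite, while the target matrices $A_\pm:=(\abs{v_\pm}^2-2e_\pm)\eta(z_\pm)\otimes\eta(z_\pm)$ imposed by $M(z_\pm)=0$ are both rank-one negative semidefinite; matching $A$ with the appropriate convex combination of $A_\pm$ at the two endpoints is thus the central algebraic problem. Counting parameters, the wave cone $\Lambda$ is $5$-dimensional in the $7$-dimensional ambient space $Z$, and together with the two scalars $s_\pm$ we have $7$ unknowns against $4$ scalar equations (each $M=0$ being a $2$-dimensional traceless-symmetric constraint), so the system is generically underdetermined. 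A tractable route is to clear denominators in the rational map $s\mapsto M(z+s\bar{z})$ into a matrix polynomial in $s$, and then to choose $\bar{z}$ so that its matrix-valued coefficients collapse onto a common one-dimensional line inside the $2$-dimensional space of traceless symmetric matrices, reducing the problem to a single scalar polynomial equation in $s$.

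The main obstacle is then ensuring that the two resulting roots $s_\pm$ have opposite signs, so that $z$ actually lies strictly inside the segment $[z_-,z_+]$ rather than outside it. This sign condition is expected to be forced precisely by the strict inequality in the hypothesis, manifesting itself as a definite sign of the product of the constant and leading coefficients of the reduced scalar polynomial (essentially, the sign of a determinant built from $A$ up to positive factors). Once this sign is in place, the remaining side conditions at the endpoints follow by direct inspection: the positivity $2e_\pm>\abs{v_\pm}^2$ is built into the construction together with the a priori bound $\abs{v}^2<2e$ implied by $\lamax(v\otimes v-\sigma)<e$ combined with $\tr(v\otimes v-\sigma)=\abs{v}^2$, while $m_\pm\neq(e_\pm+p)v_\pm$ follows from the standing hypothesis $m\neq(e+p)v$ via the explicit form of $W(s):=m+s\bar{m}-(e+s\bar{e}+p)(v+s\bar{v})$ evaluated at $s=s_\pm$.
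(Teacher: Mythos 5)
Your overall route coincides with the paper's: connect $z$ along a single $\Lambda$-segment to two endpoints satisfying the hypotheses of Lemma \ref{lem:segments_of_full_hull_1}, by choosing the direction so that $s\mapsto M(z+s\bar z)$ collapses to a scalar quadratic times a fixed traceless matrix, and let the hypothesis force roots of opposite sign. But as written there are two genuine gaps. First, the existence of a wave-cone direction with the required collapse property is only made plausible by a parameter count; it is never constructed, and nothing in your argument guarantees that the nonlinear kernel constraint defining $\Lambda$ is compatible with the matching equations. In the paper this is exactly the content of the explicit choice \eqref{eq:definition_of_second_Lambda_segments}, with $\bar e=0$ and $\bar m=(e+p)\bar v$ so that $m-(e+p)v$, hence $\eta$, is constant along the segment (which is also what makes $M(z+s\bar z)$ a genuine quadratic rather than a rational expression to be "cleared of denominators"), followed by the verification $\xi^T\bar\sigma\xi=0$ with $\xi=\bar v^\perp$, which is what allows one to solve $\bar\sigma\xi+c\bar v=0$ and conclude $\bar z\in\Lambda$. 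The step "choose $\bar z$ so the coefficients collapse onto a line" is precisely the nontrivial part, and it is not carried out. Your claim that $m_\pm\neq(e_\pm+p)v_\pm$ follows from the form of $W(s)$ likewise presupposes this specific (unspecified) choice, since for a general direction $W$ is quadratic in $s$ and could vanish at $s_\pm$.

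Second, the endpoint condition $2e_\pm>|v_\pm|^2$ does not follow "by direct inspection" from the base-point bound $|v|^2<2e$: with $\bar e=0$ one has $e_\pm=e$ while $|v_\pm|^2=|v|^2+|M(z)|^2/M_1(z)>|v|^2$ at the roots of \eqref{eq:roots_of_second_segments}, so the inequality at the endpoints is strictly stronger than at $z$. It is equivalent to $(2e-|v|^2)M_1(z)>|M(z)|^2$, cf. \eqref{eq:condition_for_second_segments}, and the paper must prove the algebraic identity \eqref{eq:M_v_identity}, which exhibits this quantity as $\big(e-\lamax(v\otimes v-\sigma)\big)\big(e-\lamin(v\otimes v-\sigma)\big)$ --- a second, essential use of the hypothesis beyond the positivity $M_1(z)>0$ of \eqref{eq:M_1_positive} that yields the opposite-sign roots. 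Without this identity (or an equivalent argument) the endpoints need not satisfy the hypotheses of Lemma \ref{lem:segments_of_full_hull_1}, so the proof as proposed is incomplete at exactly the two places where the hypothesis and the structure of $K$ actually enter.
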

\begin{proof}
We will show that every such $z$ is contained in a $\Lambda$-segment with endpoints in the subset of $K^{\Lambda,1}$ given in Lemma \ref{lem:segments_of_full_hull_1}. 

We consider the matrices
\[
Y_1(z):=\eta(z)\otimes\eta(z)-\eta(z)^\perp\otimes\eta(z)^\perp,\quad Y_2(z):=\eta(z)\otimes\eta(z)^\perp+\eta(z)^\perp\otimes\eta(z)
\]
as a basis of $\cS_0^{2\times 2}$ and denote by $M_1(z),M_2(z)$ the components of $M(z)$ with respect to this basis. I.e., $M(z)=M_1(z)Y_1(z)+M_2(z)Y_2(z)$.
Observe that 
\begin{align}
\begin{split}\label{eq:M_1_positive}
M_1(z)&=\eta(z)^TM(z)\eta(z)=e-\abs{v}^2+\eta(z)^T(v\otimes v-\sigma)\eta(z)\\
&\geq e-\abs{v}^2+\lamin(v\otimes v-\sigma)=e-\lamax(v\otimes v-\sigma)>0.
\end{split}
\end{align}
For later use we also state the corresponding upper bound
\begin{equation}\label{eq:M_1_bounded_above}
M_1(z)\leq e-\lamin(v\otimes v-\sigma).
\end{equation}
The $\Lambda$-direction that serves our purpose will be
\begin{align}\label{eq:definition_of_second_Lambda_segments}
\begin{gathered}
\bar{e}=0,\quad \bar{v}=-\frac{M_2(z)}{\abs{M(z)}}\eta(z)+\frac{M_1(z)}{\abs{M(z)}}\eta(z)^\perp,\quad \bar{m}=(e+p)\bar{v},\\
\bar{\sigma}=v\otimes\bar{v}+\bar{v}\otimes v+2v\cdot\bar{v}\left(\frac{M_1(z)}{\abs{M(z)}^2}M(z)-\eta(z)\otimes\eta(z)\right),
\end{gathered}
\end{align}
where $\abs{M(z)}^2=M_1(z)^2+M_2(z)^2>0$. Next we check that $\bar{z}$ defined in this way is indeed an element of $\Lambda$. Recall that $\tr M(z)=0$, such that $\bar{\sigma}\in \cS^{2\times 2}_0$.
Choose $\xi=\bar{v}^\perp=\abs{M(z)}^{-1}\big(-M_2(z)\eta(z)^\perp-M_1(z)\eta(z)\big)$. Since $\bar{e}=0$ and $\bar{m}\in\R\bar{v}$ it only remains to find $c\in\R$, such that $\bar{\sigma}\xi+c\bar{v}=0$. This is possible, since 
\begin{align*}
\xi^T\bar{\sigma}\xi&=2v\cdot\bar{v}\left(\frac{M_1(z)}{\abs{M(z)}^2}\xi^TM(z)\xi-(\eta(z)\cdot\xi)^2\right)\\
&=2v\cdot\bar{v}\left(\frac{M_1(z)}{\abs{M(z)}^2}M_1(z)-\frac{M_1(z)^2}{\abs{M(z)}^2}\right)=0.
\end{align*}
Using now this $\Lambda$ direction we directly see that 
\[
m+s\bar{m}-(e+s\bar{e}+p)(v+s\bar{v})=m-(e+p)v
\]
for all $s\in\R$. Hence $\eta(z+s\bar{z})=\eta(z)$ for any $s\in\R$ and it remains to find $s_1<0<s_2$ with $2e-\abs{v+s\bar{v}}^2>0$ and $M(z+s\bar{z})=0$. There holds
\begin{align}\label{eq:roots_of_second_segments}
\begin{split}
M(z+s\bar{v})&=s^2(\bar{v}\otimes\bar{v}-\eta(z)\otimes\eta(z))\\
&\hspace{20pt}+s(v\otimes\bar{v}+\bar{v}\otimes v-2v\cdot\bar{v}\eta(z)\otimes\eta(z)-\bar{\sigma})+M(z)\\
&=\left(-\frac{M_1(z)}{\abs{M(z)}^2}s^2-2v\cdot\bar{v}\frac{M_1(z)}{\abs{M(z)}^2}s+1\right)M(z).
\end{split}
\end{align}
By \eqref{eq:M_1_positive} there exist $s_1<0<s_2$ solving this quadratic equation. Now all that needs to be checked for $s=s_{1,2}$ is the inequality $2e-\abs{v+s\bar{v}}^2>0$, which by the definition of $s_{1,2}$ can be rewritten as
\begin{align}
\label{eq:condition_for_second_segments}
0<(2e-\abs{v}^2)M_1(z)-\abs{M(z)}^2.
\end{align}
We abbreviate 
\begin{gather*}
a_1=\eta(z)^T(v\otimes v-\sigma)\eta(z),\quad a_2=\eta(z)^T(v\otimes v-\sigma)\eta(z)^\perp,\\
a_3=(\eta(z)^\perp)^T(v\otimes v-\sigma)\eta(z)^\perp.
\end{gather*}
Using $\abs{v}^2=\tr(v\otimes v-\sigma)=a_1+a_3$ and $a_1a_3-a_2^2=\det(v\otimes v-\sigma)$ the right-hand side of \eqref{eq:condition_for_second_segments} becomes
\begin{align}
\begin{split}\label{eq:M_v_identity}
(2e-\abs{v}^2)M_1(z)-\abs{M(z)}^2&=(2e-\abs{v}^2)(e-\abs{v}^2+a_1)-(e-\abs{v}^2+a_1)^2-a_2^2\\
&=e^2-e\abs{v}^2+\abs{v}^2a_1-a_1^2-a_2^2\\
&=\left(e-\frac{1}{2}\abs{v}^2\right)^2-\left(\frac{1}{4}\abs{v}^4-\det(v\otimes v-\sigma)\right)\\
&=\left(e-\frac{1}{2}\abs{v}^2\right)^2-\left(\lamax(v\otimes v-\sigma)-\frac{1}{2}\abs{v}^2\right)^2\\
&=\big(e-\lamax(v\otimes v-\sigma)\big)\big(e-\lamin(v\otimes v-\sigma)\big),
\end{split}
\end{align}
which is positive by assumption.
\end{proof}

\begin{remark}\label{rem:full_hull}
If we also consider the pressure $p$ as part of the variables and not as a given function,  Proposition \ref{prop:hull} implies that the $\Lambda'$-convex hull of the set
\[
\set{(v,m,\sigma,e,p)\in\R^2\times \R^2\times\cS_0^{2\times 2}\times\R\times \R:v\otimes v-\sigma =e\id,~m=(e+p)v} 
\]
is given by the convex unbounded set
\[
\set{(v,m,\sigma,e,p)\in\R^2\times \R^2\times\cS_0^{2\times 2}\times\R\times \R:\lamax(v\otimes v-\sigma)\leq e}.
\]
This statement holds true for $\Lambda'$ defined as the directions $(\bar{z},0)$ with $\bar{z}\in\Lambda$ or any bigger wave cone, e.g. the wave cone associated with \eqref{eq:linear_system_with_p} when $p$ is not fixed, or the full cone $\Lambda'=Z\times \R$.
\end{remark}

\subsection{\texorpdfstring{An $L^\infty$ bound}{Introducing a bound}}\label{sec:infty}
Now that we know the full $\Lambda$-convex hull, we essentially know the complete relaxation of our differential inclusion. However, in order to apply the usual convex integration in the Tartar framework, which relies on a bounded subset of $L^2(\T^2\times(0,T);Z)$ functions, we introduce an $L^\infty$-bound on $e$ in the set of nonlinear constraints. We fix a constant $\gamma>0$ and set 


\begin{equation}\label{eq:nonlinear_constraints_with_gamma}
K_{\gamma,(x,t)}:=\set{z\in K_{(x,t)}:e\leq \gamma}.
\end{equation}
One easily sees that any $z\in K_{\gamma,(x,t)}$ is bounded in terms of $\gamma$ and $p(x,t)$, cf. \cite[Lemma 3(iii)]{DeL-Sz-Adm} for the bound on $\sigma$.

Note that introducing such a bound is not a restriction in the case of sufficiently smooth initial data $v_0$, because then the induced velocity field $v$ is uniformly bounded on $\T^2\times[0,T)$, see for instance \cite{Marchioro_Pulvirenti}. In other words there exists a fixed $\gamma>0$, such that the corresponding function $z=(v,(\abs{v}^2/2+p)v,(v\otimes v)^\circ,\abs{v}^2/2)$ solves the linear system \eqref{eq:linear_system_with_p} with $\mu\equiv 0$ and satisfies $z(x,t)\in K_{\gamma,(x,t)}$ for all $(x,t)\in \T^2\times[0,T)$. 

From now on we again drop the $(x,t)$ dependence in our notation.

\begin{lemma}\label{lem:segments_of_gamma_hull_1}
Let $z\in Z$ satisfy $m\neq(e+p)v$, $2e-\abs{v}^2>0$, $M(z)=0$ and in addition $e<\gamma$,
\begin{equation}\label{eq:additional_condition_for_1st_segments}
\abs{\frac{(m-(e+p)v)(2\gamma-\abs{v}^2)}{(\gamma-e)(2e-\abs{v}^2)}-v}\leq\sqrt{2\gamma}.
\end{equation}
Then $z\in K^{\Lambda,1}_\gamma$.
\end{lemma}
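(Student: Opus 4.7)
My plan is to reuse the $\Lambda$-direction $\bar z\in\Lambda$ defined in \eqref{eq:definition_of_first_Lambda_direction} and the two endpoints $z_{1,2}:=z+s_{1,2}\bar z\in K$ already produced in the proof of Lemma \ref{lem:segments_of_full_hull_1}, and to show that under the additional hypothesis \eqref{eq:additional_condition_for_1st_segments} both of them in fact land in the smaller set $K_\gamma$. Since $s_1<0<s_2$, the energy component of $z_1$ equals $e+s_1<e<\gamma$, so $z_1\in K_\gamma$ is automatic. Because $z_2-z_1=(s_2-s_1)\bar z\in\Lambda$ and $z=\tfrac{s_2}{s_2-s_1}z_1+\tfrac{-s_1}{s_2-s_1}z_2$ is a genuine convex combination, it then only remains to establish the single inequality $s_2\leq \gamma-e$.

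To do so I would translate this inequality into a condition on $v$ and $\bar v$. Let $f(s):=s^2-2\beta s-\abs{m-(e+p)v}^2/(2e-\abs{v}^2)$ denote the quadratic from \eqref{eq:equation_for_roots_of_first_order_segments}; it is upward opening with roots $s_1<0<s_2$, hence $s_2\leq \gamma-e$ is equivalent to $f(\gamma-e)\geq 0$. A short computation using $\abs{\bar v}^2=(2e-\abs{v}^2)^2/\abs{m-(e+p)v}^2$ and the definition of $\beta$ in \eqref{eq:definition_of_first_Lambda_direction} yields the identity
\[
\abs{\bar v}^2 f(s)=\abs{v+s\bar v}^2-2(e+s),\qquad s\in\R,
\]
so the problem reduces to showing $\abs{v+(\gamma-e)\bar v}^2\geq 2\gamma$.

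Finally this last inequality has to be identified with the hypothesis \eqref{eq:additional_condition_for_1st_segments}. Abbreviating $w:=m-(e+p)v$, $a:=2e-\abs{v}^2>0$, $\tau:=(\gamma-e)a/\abs{w}^2$ and $\alpha:=(2\gamma-\abs{v}^2)/((\gamma-e)a)$, one has $(\gamma-e)\bar v=\tau w$ and the key product identity $\alpha\tau\abs{w}^2=2\gamma-\abs{v}^2$. Substituting $2\gamma=\abs{v}^2+\alpha\tau\abs{w}^2$ into both $\abs{v+\tau w}^2-2\gamma$ and $\abs{\alpha w-v}^2-2\gamma$ and expanding gives
\[
\abs{v+\tau w}^2-2\gamma=-\tfrac{\tau}{\alpha}\bigl(\abs{\alpha w-v}^2-2\gamma\bigr),
\]
and since $\tau,\alpha>0$, this is precisely the equivalence between $\abs{v+(\gamma-e)\bar v}^2\geq 2\gamma$ and $\abs{\alpha w-v}\leq \sqrt{2\gamma}$; the latter is exactly \eqref{eq:additional_condition_for_1st_segments}. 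The only conceptual obstacle is to realize that the rather opaque hypothesis \eqref{eq:additional_condition_for_1st_segments} has been engineered precisely so that the positive root $s_2$ of the quadratic from Lemma \ref{lem:segments_of_full_hull_1} stays $\leq \gamma-e$; once this is spotted, no new $\Lambda$-direction has to be invented and the verification collapses to the elementary identity above.
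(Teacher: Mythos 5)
Your proposal is correct and follows essentially the same route as the paper: reuse the $\Lambda$-direction and roots $s_{1,2}$ from Lemma \ref{lem:segments_of_full_hull_1}, observe that only $e+s_2\leq\gamma$ needs checking, reduce this via the trace quantity $\abs{v+s\bar v}^2-2(e+s)$ to $f(\gamma-e)\geq 0$, and identify that inequality with \eqref{eq:additional_condition_for_1st_segments}. The only difference is cosmetic: the paper performs the final equivalence by dividing by $\abs{m-(e+p)v}^2$ and multiplying by $\abs{v}^2-2\gamma<0$, whereas you encode the same manipulation in the identity relating $\abs{v+\tau w}^2-2\gamma$ and $\abs{\alpha w-v}^2-2\gamma$.
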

\begin{proof}
From the proof of Lemma \ref{lem:segments_of_full_hull_1} it only remains to verify the new condition $e+s_{1,2}\leq \gamma$ for $s_1<0<s_2$ defined as the roots of \eqref{eq:equation_for_roots_of_first_order_segments} in order to conclude that the endpoints of the $\Lambda$-segment $[z+s_1\bar{z},z+s_2\bar{z}]$ belong to $K_\gamma$. The $\bar{z}\in\Lambda$ is the one defined in \eqref{eq:definition_of_first_Lambda_direction}. Since $s_1<s_2$, it is in fact enough to show $e+s_2\leq \gamma$.

Let $f(s):=\abs{v+s\bar{v}}^2-2(e+s)$ be the trace of the right-hand side of equation \eqref{eq:vanishing_of_matrix_in_1st_segments} and observe that by construction we have $f(s)=0$ exactly for $s=s_{1,2}$. The condition $s_2\leq \gamma-e$ therefore is equivalent to $\gamma-e>0$, $f(\gamma-e)\geq 0$. Now $e<\gamma$ by assumption and 
\begin{align*}
0\leq f(\gamma-e)&=\abs{v+(\gamma-e)\frac{2e-\abs{v}^2}{\abs{m-(e+p)v}}\eta(z)}^2-2\gamma
\end{align*}
is equivalent to 
\begin{align*}
0\leq \frac{\abs{m-(e+p)v}^2(\abs{v}^2-2\gamma)}{(\gamma-e)^2(2e-\abs{v}^2)^2}+2\frac{v\cdot(m-(e+p)v)}{(\gamma-e)(2e-\abs{v}^2)}+1.
\end{align*}
Multiplying the last inequality with $\abs{v}^2-2\gamma<0$ we obtain \eqref{eq:additional_condition_for_1st_segments}.
\end{proof}


At this point we stop computing the hull in an explicit way and instead provide an easily readable condition under which an element belongs to $K^\Lambda_\gamma$ for a suitably chosen $\gamma$. 
\begin{proposition}\label{prop:condition_for_being_in_K_gamma_hull}
Assume that $z\in Z$ satisfies 
\begin{align}\label{eq:suff_condition_in_proposition}
\lamax(v\otimes v-\sigma)+\varepsilon\abs{m-(e+p)v}< e
\end{align}
for some $\varepsilon>0$. Then there exists $\gamma=\gamma_\varepsilon(e)>0$ depending only on $\varepsilon$ and $e$, such that $z\in \text{int}\left(K_\gamma^\Lambda\right)$. Moreover, the map $\R_+\ni e\mapsto\gamma_\varepsilon(e)\in\R_+$ is continuous. 
\end{proposition}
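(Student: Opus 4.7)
My plan is to adapt the two-step $\Lambda$-segment construction from Lemmas \ref{lem:segments_of_full_hull_2} and \ref{lem:segments_of_gamma_hull_1}, quantifying every estimate solely in terms of $\varepsilon$ and $e$, and then upgrade from membership to interior by a density argument.

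Under the generic assumption $m\neq (e+p)v$, I would apply the $\Lambda$-direction $\bar z$ from \eqref{eq:definition_of_second_Lambda_segments}. Its endpoints $z^\pm:=z+s_\pm\bar z$ (as constructed in the proof of Lemma \ref{lem:segments_of_full_hull_2}) automatically satisfy $M(z^\pm)=0$, $e^\pm=e$, $\eta(z^\pm)=\eta(z)$, and $m^\pm-(e^\pm+p)v^\pm=m-(e+p)v$, because $\bar e=0$ and $\bar m=(e+p)\bar v$. Using $\abs{\bar v}=1$ together with the quadratic for $s_\pm$ yields $\abs{v^\pm}^2=\abs{v}^2+\abs{M(z)}^2/M_1(z)$, and combining identity \eqref{eq:M_v_identity} with the upper bound \eqref{eq:M_1_bounded_above} on $M_1(z)$ then gives
\[
2e-\abs{v^\pm}^2 \;=\; \frac{\bigl(e-\lamax(v\otimes v-\sigma)\bigr)\bigl(e-\lamin(v\otimes v-\sigma)\bigr)}{M_1(z)} \;\geq\; e-\lamax(v\otimes v-\sigma).
\]
Plugging in \eqref{eq:suff_condition_in_proposition} produces the crucial $\varepsilon$-quantitative bound $2e-\abs{v^\pm}^2>\varepsilon\abs{m-(e+p)v}$. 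Combined with the trivial $\abs{v^\pm}<\sqrt{2e}$, the left-hand side of \eqref{eq:additional_condition_for_1st_segments} applied to $z^\pm$ is bounded by $\frac{2\gamma}{\varepsilon(\gamma-e)}+\sqrt{2e}$. Setting, for example,
\[
\gamma_\varepsilon(e) \;:=\; \max\!\left(2e,\;\tfrac{1}{2}\bigl(\tfrac{4}{\varepsilon}+\sqrt{2e}\bigr)^2\right)+1,
\]
which is continuous in $e$, makes this strictly less than $\sqrt{2\gamma}$ and also ensures $e<\gamma$ for $\gamma=\gamma_\varepsilon(e)$. Lemma \ref{lem:segments_of_gamma_hull_1} then places $z^\pm\in K_\gamma^{\Lambda,1}$ and hence $z\in K_\gamma^{\Lambda,2}\subset K_\gamma^\Lambda$.

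To dispatch the degenerate locus $m=(e+p)v$ and to establish the interior property simultaneously, note that every inequality in the previous paragraph is strict and depends continuously on $z$, while $\gamma_\varepsilon$ is continuous in $e$; hence there is an open neighborhood $U$ of $z$ on which the same $\gamma=\gamma_\varepsilon(e)$ (with its built-in slack) still fulfils the estimates of the generic case for every $z'\in U$ satisfying $m'\neq (e'+p)v'$. These points form a dense subset of $U$, and $K_\gamma^\Lambda$ is closed because the linear span of $\Lambda$ equals $Z$ (cf.\ \cite[Corollary 2.4]{Matousek_1998}); therefore $U\subset K_\gamma^\Lambda$, which both treats the degenerate case and shows $z\in\text{int}(K_\gamma^\Lambda)$. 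The main obstacle in this plan is deriving the generic-case lower bound on $2e-\abs{v^\pm}^2$ purely in terms of $\varepsilon$ and $e$; once this is in hand, the remaining estimates are elementary bookkeeping and the degenerate locus is absorbed by a standard density/openness wrap-up.
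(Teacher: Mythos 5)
Your proof is correct and follows the same two-step $\Lambda$-segment strategy as the paper (combine Lemma \ref{lem:segments_of_full_hull_2} with Lemma \ref{lem:segments_of_gamma_hull_1}, quantify everything in terms of $\varepsilon$ and $e$). The core estimate $2e-\abs{v^\pm}^2\geq e-\lamax(v\otimes v-\sigma)>\varepsilon\abs{m-(e+p)v}$ coincides with the paper's. Two points of genuine divergence are worth noting. First, you bound $\abs{v^\pm}$ by $\sqrt{2e}$, whereas the paper uses the cruder $\abs{v^\pm}\leq \tfrac{1}{2}\abs{v^\pm}^2+\tfrac12<e+\tfrac12$; since $\sqrt{2e}\leq e+\tfrac12$ always, your bound is slightly sharper and your $\gamma_\varepsilon(e)$ is correspondingly a bit smaller, but both choices close the inequality $\tfrac{2\gamma}{\varepsilon(\gamma-e)}+(\cdots)\leq\sqrt{2\gamma}$ and both are continuous in $e$. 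Second, and more interestingly, for the degenerate locus $m=(e+p)v$ the paper invokes \cite[Lemma 3(iv)]{DeL-Sz-Adm} (the known hull for the classical Euler inclusion) to place such $z$ directly in $K_\gamma^{\Lambda'}\subset K_\gamma^\Lambda$, and then obtains openness by running the argument uniformly over a ball $B_\delta(\tilde z)$; you instead observe that the non-degenerate points are dense in a neighbourhood $U$ of $z$, that your estimates are stable under small perturbations (the $+1$ slack and $\delta$ small), and that $K_\gamma^\Lambda$ is closed because $\operatorname{span}\Lambda=Z$, so $U\subset K_\gamma^\Lambda$ in one stroke. Your route is more self-contained (it avoids the external citation entirely) and simultaneously dispatches the degenerate case and the interior claim, at the price of an extra density/closure step; the paper's route is more explicit about which $\Lambda$-segments carry the degenerate points. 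Both are sound.
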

\begin{proof}
Let $\tilde{z}=(\tilde{v},\tilde{m},\tilde{\sigma},\tilde{e})\in Z$, $\varepsilon>0$ be satisfying \eqref{eq:suff_condition_in_proposition}. By continuity we can find $\delta\in (0,\tilde{e})$, such that \eqref{eq:suff_condition_in_proposition} holds true for all $z\in B_\delta(\tilde{z})$. We will show that any such $z$ is contained in $K_{\gamma}^\Lambda$ with a $\gamma>0$ only depending on $\varepsilon$ and $\tilde{e}$. 

Let $z\in B_\delta(\tilde{z})$. Assume first that $m=(e+p)v$. It follows from the known convex hull of the usual Euler equation, cf. \cite[Lemma 3(iv)]{DeL-Sz-Adm},
that $z\in K_\gamma^{\Lambda'}\subset K_\gamma^\Lambda$ for any $\gamma> e$ and $\Lambda':=\set{\bar{z}\in\Lambda:\bar{e}=0,~\bar{m}=(e+p)\bar{v}}$.

Assume next $m\neq (e+p)v$. In the proof of Lemma \ref{lem:segments_of_full_hull_2} we showed that such a $z$ is lying on the segment $[z+s_1\bar{z},z+s_2\bar{z}]$ with $\bar{z}\in\Lambda$ defined in \eqref{eq:definition_of_second_Lambda_segments} and $s_1<0<s_2$ defined as the roots of \eqref{eq:roots_of_second_segments}, such that $z+s_{1,2}\bar{z}\in K^{\Lambda,1}$ and hence $z\in K^{\Lambda,2}$. In order to have now $z+s_{1,2}\bar{z}\in K_\gamma^{\Lambda,1}$ we, because of Lemma \ref{lem:segments_of_gamma_hull_1}, need to make sure that $\gamma>e+s_{1,2}\bar{e}=e$ and that \eqref{eq:additional_condition_for_1st_segments} holds for the two points $z+s_{1,2}\bar{z}$.

Recall that along the direction $\bar{z}$ the difference $m-(e+p)v$ is conserved and also that 
\[
\abs{v+s_{1,2}\bar{v}}^2=\abs{v}^2+\frac{\abs{M(z)}^2}{M_1(z)}
\]
by the definition of $s_{1,2}$ as the roots of \eqref{eq:roots_of_second_segments}. By \eqref{eq:M_1_bounded_above} and \eqref{eq:M_v_identity} we therefore obtain
\[
2e-\abs{v+s_{1,2}\bar{v}}^2=\frac{(e-\lamax(v\otimes v-\sigma))(e-\lamin(v\otimes v-\sigma))}{M_1(z)}\geq e-\lamax(v\otimes v-\sigma).
\]
Plugging $z+s_{1,2}\bar{z}$ into the left-hand side of \eqref{eq:additional_condition_for_1st_segments} and using the latter inequality we estimate
\begin{align*}
&\abs{\frac{(m-(e+p)v)(2\gamma-\abs{v+s_{1,2}\bar{v}}^2)}{(\gamma-e)(2e-\abs{v+s_{1,2}\bar{v}}^2)}-(v+s_{1,2}\bar{v})}\\
&\hspace{60pt}\leq \frac{2\gamma\abs{m-(e+p)v}}{(\gamma-e)(e-\lamax(v\otimes v-\sigma))}+\frac{1}{2}\abs{v+s_{1,2}\bar{v}}^2+\frac{1}{2}\\
&\hspace{60pt}< \frac{2\gamma}{\varepsilon(\gamma-e)}+e+\frac{1}{2}.
\end{align*}
In order to satisfy inequality \eqref{eq:additional_condition_for_1st_segments} for $z+s_{1,2}\bar{z}$, it therefore is enough to choose $\gamma=\max\set{2e,(e+1/2+4/\varepsilon)^2/2}$, such that
\[
\frac{2\gamma}{\varepsilon(\gamma-e)}+e+\frac{1}{2}\leq \sqrt{2\gamma}.
\]

Note that this choice is still dependent (via $e$) on the considered $z\in B_\delta(\tilde{z})$. In order to make a choice depending only on $\varepsilon$ and $\tilde{e}$ we recall that $\delta<\tilde{e}$ and simply set
\[
\gamma_\varepsilon(\tilde{e}):=\max\set{4\tilde{e},(2\tilde{e}+1/2+4/\varepsilon)^2/2}.
\]
\end{proof}

\subsection{Conclusion}\label{sec:conclusion}
We begin with the following more abstract version of Theorem \ref{thm:main_theorem}.
\begin{theorem}\label{thm:abstract_CI_theorem}
Let $\gamma>0$. Theorem \ref{thm:main_theorem} remains true when condition \eqref{eq:sufficient_condition_for_Linfty_hull} is replaced by
\begin{align}\label{eq:abstract_condition_for_hull}
    (v(x,t),m(x,t),\sigma(x,t),e(x,t))\in \text{int} \left( K^{co}_{\gamma,(x,t)}\right)\text{ for all }(x,t)\in\mathscr{U}.
\end{align}
\end{theorem}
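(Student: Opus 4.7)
The plan is to carry out the standard Baire-category / Tartar-framework convex integration argument, adapted to the fact that the pointwise constraint set $K_{\gamma,(x,t)}$ depends on $(x,t)$ through the prescribed pressure.

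I would first define the space $X_0$ of strict subsolutions: tuples $\tilde z=(\tilde v,\tilde m,\tilde \sigma,\tilde e)\in L^\infty(\T^2\times(0,T);Z)$ that solve the linear system \eqref{eq:linear_system_with_p} with the prescribed data $(v_0,e_0)$, pressure $p$ and dissipation $\mu$, agree with $(v,m,\sigma,e)$ off $\mathscr{U}$, are continuous on $\mathscr{U}$, and satisfy $\tilde z(x,t)\in\mathrm{int}(K^{co}_{\gamma,(x,t)})$ there. By \cite[Lemma 3(iii)]{DeL-Sz-Adm} and local boundedness of $p$ on $\mathscr{U}$, the set $X_0$ sits in a common bounded subset of $L^\infty$, so I can metrize weak $L^2$-convergence on $X_0$ and let $X$ be the resulting completion. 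The given subsolution shows $X\neq\emptyset$, and $X$ is complete. Next, introduce $I:X\to\R$, $I(\tilde z):=\iint_{\mathscr{U}}|\tilde v|^2\,dx\,dt$; it is weakly lower semi-continuous and a pointwise limit of the weakly continuous functionals $\tilde z\mapsto\iint_{\mathscr{U}}(\rho_n*\tilde v)\cdot\tilde v\,dx\,dt$ obtained by mollifying one factor, hence Baire-$1$ on $X$, so its set of continuity points is a residual $G_\delta$.

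The crux and expected main obstacle is to show that every continuity point $z^\ast$ of $I$ in fact satisfies $z^\ast(x,t)\in K_{(x,t)}$ for almost every $(x,t)\in\mathscr{U}$. If this fails, then $d(z^\ast(x,t),K_{(x,t)})\geq\delta$ on a set $A\subset\mathscr{U}$ of positive measure, for some $\delta>0$. I would pick $\hat z\in X_0$ close to $z^\ast$ in the weak metric; using continuity of $\hat z$ and of $p$ on $\mathscr{U}$, I select a small cube $Q\subset\mathscr{U}$ on which $\hat z$ is within $\delta/100$ of a constant $z_0\in\mathrm{int}(K^{co}_{\gamma,(x_0,t_0)})$ with $d(z_0,K_{(x_0,t_0)})\geq\delta/2$. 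Corollary \ref{cor:seg} then supplies a direction $\bar z\in\Lambda$ with $|\bar z|\gtrsim\delta$ and $[z_0-\bar z,z_0+\bar z]\subset\mathrm{int}(K^{co}_{\gamma,(x_0,t_0)})$. Rescaling and inserting the localized plane wave $z_N$ from Lemma \ref{lem:locpw} into $Q$ yields perturbations $w_N\in\cC_c^\infty(Q;Z)$ solving the homogeneous system \eqref{eq:linear_system} with $\hat z+w_N\in X_0$, $w_N\rightharpoonup 0$ in $L^2$, and $\|w_N\|_{L^2}^2\geq c|Q|\delta^2$ uniformly in $N$. Thus $\hat z+w_N\rightharpoonup\hat z$ while $I(\hat z+w_N)\geq I(\hat z)+c|Q|\delta^2$; combining with the weak approximation of $z^\ast$ by $\hat z$ produces a sequence in $X_0$ converging weakly to $z^\ast$ along which $I$ jumps by a definite positive amount, contradicting continuity of $I$ at $z^\ast$.

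Consequently, for each continuity point $z^\ast$ the velocity component $v^\ast$ is a bounded globally dissipative Euler solution with the prescribed initial data, pressure and dissipation measure, coinciding with $v$ off $\mathscr{U}$. Residuality of the continuity set yields uncountably many such solutions, and the asserted weakly $L^2$-convergent sequence of solutions is obtained by selecting continuity points inside shrinking weak neighborhoods of the original subsolution. The truly delicate points all sit inside the perturbation step: I must verify that the plane wave from Lemma \ref{lem:locpw} preserves the prescribed pressure and dissipation measure, which is precisely why the wave cone \eqref{eq:wave_cone} is set up to force $\bar p\equiv 0$ and no $\mu$-component along oscillations, and that $w_N$ vanishes near $t=0$ so as not to disturb the initial data, ensured by the compact-support property recorded in Remark \ref{rem:plane_waves_at_time_0}.
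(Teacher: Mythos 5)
Your proposal is correct and takes essentially the same route as the paper: a Tartar-framework Baire-category argument built on the wave cone (Lemma~\ref{lem:locpw}), the large-cone property (Lemma~\ref{lem:big_fkn_cone}, Corollary~\ref{cor:seg}), the $L^\infty$ bound from Section~\ref{sec:infty}, and the $(x,t)$-dependence of $K_{\gamma,(x,t)}$ controlled through continuity and boundedness of $p$; the paper simply delegates the Baire step to the abstract theorems of \cite[Theorem 5.1]{Sz-Muskat} and \cite{Crippa} and only supplies the missing hypothesis --- Hausdorff continuity and boundedness of $(x,t)\mapsto K_{\gamma,(x,t)}$ --- as Lemma~\ref{lem:cont_of_constr}, whereas you spell the argument out explicitly. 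One small misattribution: the compact-support-near-$t=0$ modification in Remark~\ref{rem:plane_waves_at_time_0} is designed for the density argument in Section~\ref{sec:density_proof}; here it is not needed, since $\mathscr{U}\subset\T^2\times(0,T)$ is open, so your cube $Q\subset\mathscr{U}$ is automatically bounded away from $\{t=0\}$ and the perturbations never touch the initial slice.
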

Here we mean that Theorem \ref{thm:main_theorem} holds true word by word except that \eqref{eq:sufficient_condition_for_Linfty_hull} is swapped with \eqref{eq:abstract_condition_for_hull}. For further clarification we point out that the set $K_{\gamma,(x,t)}$ is defined via \eqref{eq:nonlinear_constraints}, \eqref{eq:nonlinear_constraints_with_gamma} with respect to the pressure $p$ induced by $(v,m,\sigma,e)$, cf. Definition \ref{def:weaksolslin}.
\begin{proof}[Proof of Theorem \ref{thm:abstract_CI_theorem}]
We start by observing that if $p$ was constant in $\mathscr{U}$, then \eqref{eq:linear_system_with_p} together with the set of constraints $K_\gamma$ would  fit into the framework stated in the appendix of \cite{Sz-Muskat}. In fact, due to Sections \ref{sec:waves}, \ref{sec:segments} and \ref{sec:infty} above, the result would directly follow  from \cite[Theorem 5.1]{Sz-Muskat}.

However, the Tartar framework can easily be adapted to the case when the set of constraints also depends on $(x,t)$. The extra condition which is needed is that the map
$(x,t)\mapsto K_{\gamma,(x,t)}$ is continuous and bounded on $\mathscr{U}$ with respect to the Hausdorff metric $d_{\mathcal{H}}$, see \cite{Crippa}. We prove this in Lemma \ref{lem:cont_of_constr} below. 
This allows us to conclude the proof of Theorem \ref{thm:abstract_CI_theorem}.
\end{proof}

\begin{lemma}\label{lem:cont_of_constr}
Let $\mathscr{U}\subset\T^2\times(0,T)$ be an open set and $p:\mathscr{U}\rightarrow\R$ be a continuous and bounded function. The map $(x,t)\mapsto K_{\gamma,(x,t)}$ is continuous and bounded on $\mathscr{U}$ with respect to the Hausdorff metric $d_{\mathcal{H}}$.
\end{lemma}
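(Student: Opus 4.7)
\medskip

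\noindent\textbf{Proof plan for Lemma \ref{lem:cont_of_constr}.} The plan is to observe that the only $(x,t)$-dependence in the set of constraints
\[
K_{\gamma,(x,t)}=\{(v,m,\sigma,e)\in Z:\ v\otimes v-\sigma=e\id,\ m=(e+p(x,t))v,\ e\le\gamma\}
\]
sits in the $m$-component, and it enters linearly through $p$. So boundedness and continuity in the Hausdorff metric will follow essentially for free from the assumptions that $e\le\gamma$ and that $p$ is continuous and bounded on $\mathscr{U}$.

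First I would verify uniform boundedness. Any $z=(v,m,\sigma,e)\in K_{\gamma,(x,t)}$ satisfies $e=\tfrac12|v|^2\le\gamma$, hence $|v|\le\sqrt{2\gamma}$. Since $\sigma=v\otimes v-e\id$, we get $|\sigma|\le C\gamma$, and using $|m|\le(e+|p(x,t)|)|v|\le(\gamma+\|p\|_{L^\infty(\mathscr{U})})\sqrt{2\gamma}$, the set $\bigcup_{(x,t)\in\mathscr{U}}K_{\gamma,(x,t)}$ is contained in a fixed ball of $Z$.

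Next, for continuity, fix $(x_1,t_1),(x_2,t_2)\in\mathscr{U}$. Given any $z_1=(v,(e+p_1)v,\sigma,e)\in K_{\gamma,(x_1,t_1)}$, where $p_i:=p(x_i,t_i)$, define
\[
z_2:=(v,(e+p_2)v,\sigma,e).
\]
The identities $v\otimes v-\sigma=e\id$, $e\le\gamma$ and $m=(e+p_2)v$ are preserved, so $z_2\in K_{\gamma,(x_2,t_2)}$, and
\[
|z_1-z_2|=|p_1-p_2|\,|v|\le\sqrt{2\gamma}\,|p(x_1,t_1)-p(x_2,t_2)|.
\]
Swapping the roles of the two points yields the symmetric bound, so
\[
d_{\mathcal{H}}\bigl(K_{\gamma,(x_1,t_1)},K_{\gamma,(x_2,t_2)}\bigr)\le\sqrt{2\gamma}\,|p(x_1,t_1)-p(x_2,t_2)|,
\]
and continuity of $p$ on $\mathscr{U}$ gives continuity of $(x,t)\mapsto K_{\gamma,(x,t)}$. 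There is no real obstacle here; the only thing to keep in mind is that the $(x,t)$-dependence is confined to the affine shift of $m$, so a single explicit ``transport'' of points between the two sets suffices to control the Hausdorff distance.
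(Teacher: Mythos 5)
Your proof is correct and follows essentially the same route as the paper: both identify that the only $(x,t)$-dependence enters linearly through $p$ in the $m$-component, transport a point $z\in K_{\gamma,(x_1,t_1)}$ to $z'\in K_{\gamma,(x_2,t_2)}$ by replacing $p(x_1,t_1)$ with $p(x_2,t_2)$, and bound the difference by $\sqrt{2\gamma}\,|p(x_1,t_1)-p(x_2,t_2)|$ using $|v|\le\sqrt{2\gamma}$. The only cosmetic difference is that you state the explicit Lipschitz-type bound on $d_{\mathcal H}$ in terms of $|p(x_1,t_1)-p(x_2,t_2)|$, while the paper phrases the same computation in an $\varepsilon$--$\delta$ form and cites a Hausdorff-distance criterion from Crippa et al.
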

\begin{proof}
Let $y=(x,t)\in\mathscr{U}$. For $\varepsilon>0$ there exists $\delta>0$ such that 
\begin{align}\label{eq:squareroot}
\abs{p(y)-p(y')}<\frac{1}{\sqrt{2\gamma}}\varepsilon,
\end{align} for any $y'\in B_\delta(y)\subset\mathscr{U}.$
Regarding the continuity, due to \cite[Lemma 3.1]{Crippa}, it suffices to prove that 
\begin{itemize}
\item for any $z\in K_{\gamma,y}$ there exists $z'\in K_{\gamma,y'}\cap B_{\varepsilon}(z),$
\item for any $z\in K_{\gamma,y'}$ there exists $z'\in K_{\gamma,y}\cap B_{\varepsilon}(z),$
\end{itemize}
since then $d_{\mathcal{H}}(K_{\gamma,y},K_{\gamma,y'})< \varepsilon$. Therefore let $$z=(v,(e+p(y))v,v\otimes v-e\id,e)\in K_{\gamma,y},$$
and define $$z'=(v,(e+p(y'))v,v\otimes v-e\id,e).$$
We clearly have $z'\in K_{\gamma,y'}$. Furthermore, using \eqref{eq:squareroot} it follows that
$$|z-z'|\leq\sqrt{2\gamma} \abs{p(y)-p(y')} <  \varepsilon.$$

It is easy to see that this construction is symmetric with respect to $y,y'$, so one can similarly prove that for any $z'\in K_{\gamma,y'}$ there exists $z\in K_{\gamma,y}$ such that $|z-z'|<\varepsilon$.

The boundedness of $\bigcup_{y\in\mathscr{U}}K_{\gamma,y}$ follows from the boundedness of $p$ and the fact that every $K_{\gamma,y}$ is bounded in terms of $\gamma$ and $p(y)$ as observed in Section \ref{sec:infty}.
\end{proof}
\begin{proof}[Proof of Theorem \ref{thm:main_theorem}]
Let $(z,p)$, $z=(v,m,\sigma,e)$ be as stated in Theorem \ref{thm:main_theorem}. We set
$$
\gamma:=\sup_{\mathscr{U}}\gamma_\varepsilon(e)+\norm{e}_{L^\infty(\mathscr{U})}+1<+\infty,
$$ where the function $\gamma_\varepsilon$ is given by Proposition \ref{prop:condition_for_being_in_K_gamma_hull} and $\varepsilon>0$ is taken from \eqref{eq:sufficient_condition_for_Linfty_hull}. By said Proposition there holds $z(x,t)\in \text{int} \left(K^{co}_{\gamma,(x,t)}\right)$ for any $(x,t)\in\mathscr{U}$ and we therefore can conclude by Theorem \ref{thm:abstract_CI_theorem} above.
\end{proof}

\section{The density result}\label{sec:density_proof}

In this section we prove Theorem \ref{thm:dense}, by generalizing some of the techniques from \cite{Sz-MPI}. The proof is split into 3 steps.

\textbf{Step 1.} We begin with the construction of initial data arbitrarily close to any solenoidal $L^2$ vector field, for which there exists a subsolution with turbulence at initial time for any sufficiently small dissipation measure $\mu$.

More precisely, let $\delta>0$
and 
\begin{align}
    B_{\delta,-}=\{\mu\in\cC^0(\T^2\times [0,T]):\ -\delta/T<\mu\leq 0\}.
\end{align}
Contrary to the previous section we will now simultaneously work with different pressures $p\in\cC^0(\T^2\times[0,T])$. In order to emphasize the dependence of the nonlinear constraints on the pressure, we now write $K_{\gamma,p(x,t)}$ instead of $K_{\gamma,(x,t)}$. 

\begin{lemma}\label{lem:in3}
Let $w\in L^2(\T^2)$ with $\divv w=0$. For any $\mu\in B_{\delta,-}$ there exists 
$z=z^\mu\in \cC^0(\T^2\times[0,T])$, $p=p^\mu\in \cC^0(\T^2\times [0,T])$  
solving \eqref{eq:linear_system_with_p}, as well as $\gamma>0$ such that $z(x,t)\in\text{int }(K_{\gamma,p(x,t)}^{co})\text{ for }(x,t)\in\T^2\times[0,T]$, and
\begin{align}\label{eq:nsclose}
\|w-v(\cdot,0)\|_2^2\leq \delta,\quad \int_{\T^2} e(x,0)-\frac{1}{2}|v(x,0)|^2 \, dx\leq \delta.
\end{align}
Furthermore, the family $\left(z^\mu,p^\mu\right)_{\mu\in B_{\delta,-}}$ is uniformly equicontinuous at the time slice $\T^2\times\{0\}$ and $(z^\mu(\cdot,0),p^\mu(\cdot,0))$ is independent of $\mu$, as is $\gamma$.
\end{lemma}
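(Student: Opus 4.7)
The plan is to build the subsolution explicitly from a smooth $2$D Euler solution, using the freedom in the energy variable $e$ to absorb the prescribed dissipation measure $\mu$. First I would fix a smooth divergence-free approximation $\tilde w\in\cC^\infty(\T^2;\R^2)$ of $w$ with $\|w-\tilde w\|_2^2\leq\delta$, and let $(v^{\mathrm{Eu}},p^{\mathrm{Eu}})\in\cC^\infty(\T^2\times[0,T])$ be the smooth Euler flow emanating from $\tilde w$ (globally existent in the $2$D case by classical well-posedness), with $p^{\mathrm{Eu}}$ normalized to zero spatial mean. Then, writing $\eta(x,t):=\delta+\int_0^t\mu(x,s)\,ds$, I would set
\begin{align*}
v:=v^{\mathrm{Eu}},\quad \sigma:=(v^{\mathrm{Eu}}\otimes v^{\mathrm{Eu}})^\circ,\quad m:=\left(\tfrac{1}{2}|v^{\mathrm{Eu}}|^2+p^{\mathrm{Eu}}\right)v^{\mathrm{Eu}},\\
e:=\tfrac{1}{2}|v^{\mathrm{Eu}}|^2+\eta,\qquad p:=p^{\mathrm{Eu}}-\eta.
\end{align*}

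Verifying the claims is then essentially bookkeeping. The momentum equation reduces to the Euler equation for $v^{\mathrm{Eu}}$ because the $\pm\eta$ contributions cancel in $\nabla(e+p)$; $\divv v=0$ is immediate; and the energy equation follows from the pointwise smooth Euler identity $\partial_t(\tfrac{1}{2}|v^{\mathrm{Eu}}|^2)+\divv m=0$ together with $\partial_t\eta=\mu$. Since $m=(e+p)v$ exactly and $v\otimes v-\sigma=\tfrac{1}{2}|v|^2\,\id$, one has $\lamax(v\otimes v-\sigma)=\tfrac{1}{2}|v|^2<\tfrac{1}{2}|v|^2+\eta=e$ as soon as $\eta>0$, so that $z(x,t)\in\text{int}(K_{\gamma,p(x,t)}^{co})$ by Proposition~\ref{prop:hull}. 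Taking $\gamma:=\tfrac{1}{2}\sup_{t\in[0,T]}\|v^{\mathrm{Eu}}(\cdot,t)\|_\infty^2+\delta+1$, finite and $\mu$-independent, also secures $e\leq\gamma$. The initial bounds $\|w-v(\cdot,0)\|_2^2\leq\delta$ and $\int_{\T^2}(e(\cdot,0)-\tfrac{1}{2}|v(\cdot,0)|^2)\,dx=\delta$ follow immediately; the independence of $(z^\mu(\cdot,0),p^\mu(\cdot,0))$ from $\mu$ holds because only $\eta$ and $p$ carry $\mu$-dependence and $\eta(\cdot,0)\equiv\delta$, $p(\cdot,0)=p^{\mathrm{Eu}}(\cdot,0)-\delta$. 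The uniform equicontinuity at $\T^2\times\{0\}$ reduces to smoothness of $v^{\mathrm{Eu}},p^{\mathrm{Eu}}$ combined with the $\mu$-uniform estimate $|\int_0^t\mu\,ds|\leq\delta t/T$.

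The only genuinely delicate point is maintaining $\eta>0$ strictly on all of $\T^2\times[0,T]$ while saturating the integral constraint at $t=0$: the choice $\eta(\cdot,0)\equiv\delta$ exhausts the budget $\int(e_0-\tfrac{1}{2}|v_0|^2)\,dx\leq\delta$, so positivity for $t\in(0,T]$ rests on the strict inequality $-\delta/T<\mu$. For each fixed $\mu\in B_{\delta,-}$, continuity on the compact set $\T^2\times[0,T]$ yields $\epsilon_\mu>0$ with $\mu\geq-\delta/T+\epsilon_\mu$, whence $\eta(x,t)\geq\epsilon_\mu t>0$ for $t>0$ while $\eta(x,0)=\delta$. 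This is precisely where the openness of $B_{\delta,-}$ is indispensable.
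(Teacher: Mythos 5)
Your construction is exactly the paper's: approximate $w$ by a smooth solenoidal field, run the smooth 2D Euler flow $(\tilde v,\tilde p)$, and shift $e$ up and $p$ down by $\eta(x,t)=\delta+\int_0^t\mu\,ds$; the verification of \eqref{eq:linear_system_with_p}, the identities $m=(e+p)v$ and $e-\lamax(v\otimes v-\sigma)=\eta>0$, the bounds \eqref{eq:nsclose}, the $\mu$-independence at $t=0$, the equicontinuity, and the use of the strict bound $\mu>-\delta/T$ to keep $\eta>0$ all match the paper's proof.

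The one genuine flaw is the justification of $z(x,t)\in\text{int}\,(K_{\gamma,p(x,t)}^{co})$. You invoke Proposition \ref{prop:hull} and then pick $\gamma=\tfrac12\sup\|v^{\mathrm{Eu}}\|_\infty^2+\delta+1$ merely so that $e\leq\gamma$. But Proposition \ref{prop:hull} computes the hull of the \emph{unconstrained} set $K_{(x,t)}$ and only yields $z\in\text{int}\,(K^{co}_{(x,t)})$; it says nothing about the hull of the bounded set $K_{\gamma,(x,t)}=\set{z\in K_{(x,t)}:e\leq\gamma}$, which is what the lemma (and the subsequent convex integration via Theorem \ref{thm:abstract_CI_theorem}) requires. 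The interior of $K_\gamma^{co}$ is \emph{not} simply $\set{\lamax(v\otimes v-\sigma)<e}\cap\set{e<\gamma}$, and the paper never computes it: this is precisely why Lemma \ref{lem:segments_of_gamma_hull_1} carries the extra condition \eqref{eq:additional_condition_for_1st_segments} and why Proposition \ref{prop:condition_for_being_in_K_gamma_hull} only gives interior membership for $\gamma=\gamma_\varepsilon(e)$ of size roughly $\max\set{4e,(2e+1/2+4/\varepsilon)^2/2}$ — much larger than your $e+1$. Note also that interior membership requires a full neighbourhood of $z$ to lie in $K_\gamma^{co}$, and nearby points have $m\neq(e+p)v$, so the easy case of exact Euler states does not suffice. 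The gap is local and easily repaired along the paper's lines: since $m-(e+p)v=0$, condition \eqref{eq:suff_condition_in_proposition} holds for every $\varepsilon>0$, so apply Proposition \ref{prop:condition_for_being_in_K_gamma_hull} (say with $\varepsilon=1$) and set $\gamma:=\sup\gamma_1(e)$, which is finite by continuity of $\gamma_1$ and the uniform bound $e\leq\tfrac12\|v^{\mathrm{Eu}}\|_\infty^2+\delta$, and is independent of $\mu$. With that replacement your argument coincides with the paper's proof.
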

\begin{proof}

Let $\mu\in \mathcal C^0(\T^2\times[0,T])$ with $-\frac{\delta}{T}< \mu\leq 0$, and $w\in L^2(\T^2)$ with $\divv w=0$. Then there exists $w_{\delta}\in \cC^\infty(\T^2)$ with $\divv w_{\delta}=0$ and such that
$$\|w-w_{\delta}\|_2^2\leq \delta.$$
Since we are in the case of two space dimensions, it follows that 
there exists a solution 
 $(\tilde v,\tilde p)\in \cC^0(\T^2\times[0,T])$ to the incompressible Euler equations with initial data $w_\delta$.

Therefore, we may set
\begin{align*}
    v&:=\tilde v,\\
    m&:=\left(\tilde p+\frac{|\tilde v|^2}{2}\right)\tilde v
    ,\\
    \sigma&:=\tilde v\otimes \tilde v-\frac{|\tilde v|^2}{2}\id,\\
    e&:=
    \frac{|\tilde v|^2}{2}+\delta+\int_0^t \mu(x,s)\, ds,\\
    p&:=\tilde p - \delta - \int_0^t \mu(x,s)\, ds,
\end{align*}
to obtain that $z=(v,m,\sigma,e)$ solves \eqref{eq:linear_system_with_p} with pressure $p$ and energy dissipation rate $\mu$. The stated equicontinuity and $\mu$-independence at $t=0$ can easily be observed.

Next one checks that
$m=(e+p)v$ and
$$e-\lamax(v\otimes v-\sigma)=e-\frac{1}{2}|v|^2=\delta+\int_0^t \mu(x,s)>0.$$
 From Proposition \ref{prop:condition_for_being_in_K_gamma_hull} it then follows that there exists $\gamma>0$, which can be chosen independently of $\mu$, such that $z(x,t)\in\text{int}(K_{\gamma,p(x,t)}^{co})\text{ for }(x,t)\in\T^2\times[0,T]$.
 
 Finally, we have
 \begin{align*}
     \int_{\T^2} e(x,0)-\frac{1}{2}|v(x,0)|^2 \, dx=\delta|\T^2|,
 \end{align*}
which finishes the proof of the Lemma.
 \end{proof}

\textbf{Step 2.} In this step we prove a typical perturbation lemma for the initial data and subsolutions constructed in the previous step. For a more detailed proof we refer to
\cite{Crippa,GKSz,Sz-MPI}. Note however that in contrast to the perturbations therein we pay slightly more attention to the fact that we want to use the same perturbation for the whole family of subsolutions from Step 1.
\begin{lemma}\label{lem:in1} Let $\left(z^\mu,p^\mu\right)_{\mu\in B_{\delta,-}}$ and $\gamma>0$ be given by Lemma \ref{lem:in3}.
There exists $C>0$ and a sequence
$\{z_j\}_{j\geq 0}\subset \cC^0(\T^2\times[0,T])$ -- both independent of $\mu$ -- such that
\begin{enumerate}[(i)]
    \item $z^\mu+z_j$ solves \eqref{eq:linear_system_with_p} with $p^\mu$ and $\mu$,
    \item $z^\mu(x,t)+z_j(x,t)\in\text{int }(K_{\gamma,p^\mu(x,t)}^{co})\text{ for }(x,t)\in\T^2\times[0,T]$,
    \item $z_j(\cdot,0)\rightharpoonup 0$
weakly in $L^2(\T^2)$,
\item $\supp(z_j)\subset\T^2\times[0,1/j)$,
\item there holds
$$\int_{\T^2} |z_j(x,0)|^2\, dx \geq C\int_{\T^2} \left(d(z(x,0),K_{\gamma,p(x,0)})\right)^2\, dx .$$
\end{enumerate}
\end{lemma}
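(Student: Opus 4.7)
\medskip

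\textbf{Proof plan.} The core idea is to build the perturbation $z_j$ only from data at the time slice $t=0$, exploiting the two crucial features guaranteed by Lemma \ref{lem:in3}: that $(z^\mu(\cdot,0),p^\mu(\cdot,0))$ does not depend on $\mu$, and that the family $(z^\mu,p^\mu)_{\mu\in B_{\delta,-}}$ is uniformly equicontinuous at $t=0$. Denote $\hat z(x):=z^\mu(x,0)$ and $\hat p(x):=p^\mu(x,0)$. Since $\hat z(x)\in\mathrm{int}(K_{\gamma,\hat p(x)}^{co})$ (with $K_{\gamma,\hat p(x)}$ compact), Corollary \ref{cor:seg} yields for every $x$ a direction $\bar z(x)\in\Lambda$ with $|\bar z(x)|\gtrsim d(\hat z(x),K_{\gamma,\hat p(x)})$ and $[\hat z(x)-\bar z(x),\hat z(x)+\bar z(x)]\subset\mathrm{int}(K_{\gamma,\hat p(x)}^{co})$.

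The construction then follows the standard recipe of Sz-MPI and GKSz: by a selection/covering argument, one approximates $\bar z(\cdot)$ by a piecewise-constant map obtained from a finite partition of $\T^2$ into small open sets $\{U_\ell\}_\ell$, on each of which a constant $\bar z_\ell\in\Lambda$ satisfies $[\hat z(x)\pm\bar z_\ell]\subset\mathrm{int}(K_{\gamma,\hat p(x)}^{co})$ for all $x\in U_\ell$ and $\sum_\ell\int_{U_\ell}|\bar z_\ell|^2\,dx\gtrsim\int_{\T^2}d(\hat z(x),K_{\gamma,\hat p(x)})^2\,dx$. On each $U_\ell$, I invoke Lemma \ref{lem:locpw} in the form of Remark \ref{rem:plane_waves_at_time_0}: a rescaling of the localized plane wave for the direction $\bar z_\ell$ yields $\zeta_\ell^j\in\mathcal C_c^\infty(U_\ell\times[0,1/j);Z)$ solving \eqref{eq:linear_system}, weakly convergent to $0$ at $t=0$ and satisfying $\int_{\T^2}|\zeta_\ell^j(x,0)|^2\,dx\ge C_0|\bar z_\ell|^2|U_\ell|$. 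Setting $z_j:=\sum_\ell \zeta_\ell^j$ then gives (i) (since $z_j$ solves the homogeneous linear system and the disjoint space supports yield summation), (iii) (oscillation), (iv) (time support), and (v) (combining the pointwise lower bounds).

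The only point still requiring attention is (ii), i.e.\ the pointwise inclusion $z^\mu(x,t)+z_j(x,t)\in\mathrm{int}(K_{\gamma,p^\mu(x,t)}^{co})$ uniformly in $\mu$. At $t=0$ this is immediate from the segment property and the closeness of $z_j(x,0)$ to $[-\bar z_\ell,\bar z_\ell]$ provided by Lemma \ref{lem:locpw}(i). For $t\in(0,1/j)$ one uses that the map $(x,t)\mapsto K_{\gamma,p^\mu(x,t)}^{co}$ is continuous in the Hausdorff metric (Lemma \ref{lem:cont_of_constr}) together with the \emph{uniform} equicontinuity of $(z^\mu,p^\mu)_{\mu\in B_{\delta,-}}$ at $t=0$: this yields a modulus $\omega(1/j)\to 0$, independent of $\mu$, controlling how far $(z^\mu(x,t),p^\mu(x,t))$ can drift from $(\hat z(x),\hat p(x))$ when $t<1/j$. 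Since the segments $[\hat z(x)\pm\bar z_\ell]$ lie strictly in the interior of $K_{\gamma,\hat p(x)}^{co}$ with a uniform gap (obtained from compactness of $\T^2$ and continuity), choosing $j$ large enough keeps the perturbed tuple inside the interior simultaneously for every $\mu$. If necessary, one discards an initial segment of the sequence to enforce the bound for every $j\ge j_0$; reindexing gives the claim.

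The main obstacle is precisely this $\mu$-uniformity in (ii): one has to avoid using any $\mu$-dependent information in the construction of $z_j$, and instead absorb the spread of the family $\{(z^\mu,p^\mu)\}$ into the shrinking time support $[0,1/j)$. Everything else (selection of $\Lambda$-directions, localization, summation, verification of (i),(iii),(iv),(v)) is a routine adaptation of the perturbation arguments of \cite{Sz-MPI,Crippa,GKSz} to our state variable $z=(v,m,\sigma,e)$, relying on Sections \ref{sec:waves}--\ref{sec:infty}.
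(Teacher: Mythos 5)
Your proposal is correct and follows essentially the same route as the paper's proof: $\Lambda$-segments at $t=0$ from Corollary \ref{cor:seg}, the $\mu$-independence of $(z^\mu(\cdot,0),p^\mu(\cdot,0))$ together with the uniform equicontinuity at $t=0$ (via Lemma \ref{lem:cont_of_constr}-type Hausdorff continuity) to get a $\mu$-uniform neighbourhood in which the inclusion persists, localized plane waves with shrinking time support from Lemma \ref{lem:locpw} and Remark \ref{rem:plane_waves_at_time_0}, and a finite covering argument to sum the contributions and obtain (v). The only cosmetic difference is that you split the space and time neighbourhoods while the paper works with space-time balls $B_{r_0}((x_0,0))$, which changes nothing of substance.
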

\begin{proof}

Let $x_0\in\T^2$. Let us fix some
arbitrary $\mu\in B_{\delta,-}$ and consider the associated $z=z^\mu$, $p=p^\mu$ solving \eqref{eq:linear_system_with_p}.

Since $ z (x_0,0)\in \text{int }(K_{\gamma,p(x_0,0)}^{co})$, it follows
from Corollary \ref{cor:seg} that there exists $\bar z=\bar z (x_0,0) \in\Lambda$ such that
\begin{gather*}
[z(x_0,0)-\bar z,z(x_0,0)+\bar z]\subset \text{int} (K_{\gamma,p(x_0,0)}^{co}),\quad
|\bar z|\geq\frac{1}{2N}d(z(x_0,0),K_{\gamma,p(x_0,0)}).
\end{gather*}
Consequently, there exists $\rho=\rho(x_0,0) >0$ such that
$$[z(x_0,0)-\bar z,z(x_0,0)+\bar z]+\overline B_{2\rho }(0) \subset \text{int} (K_{\gamma,p(x_0,0)}^{co}).$$
Let us observe that since $z(x_0,0)$ and $p(x_0,0)$ are independent of $\mu$ by virtue of Lemma \ref{lem:in3}, so are the quantities $\bar{z}$ and $\rho$.

Next using the equicontinuity of the family $(p^\mu)_{\mu\in B_{\delta,-}}$ at $(x_0,0)$ one can check as in Lemma \ref{lem:cont_of_constr} that the family of maps $\left((x,t)\mapsto K_{\gamma,p^\mu(x,t)}\right)_{\mu\in B_{\delta,-}}$ is equicontinuous at $(x_0,0)$ with respect to the Hausdorff metric $d_{\cH}$.
From this property and the equicontinuity of $(z^\mu)_\mu$ at $(x_0,0)$, it then follows that there exists $r_0=r_0(x_0,0)>0$ independent of $\mu$ such that for any $(x,t)\in B_{r_0}((x_0,0))$, there holds
$$[z(x,t)-\bar z,z(x,t)+\bar z]+\overline B_{\rho }(0) \subset \text{int} (K_{\gamma,p(x,t)}^{co}).$$

Using Lemma \ref{lem:locpw}, in particular Remark \ref{rem:plane_waves_at_time_0}, we deduce that there exists a sequence $z_k\in \cC_c^\infty(B_{r_0}(x_0)\times[0,1/k))$ which solves \eqref{eq:linear_system} and satisfies
\begin{itemize}
\item[(a)] $z(x,t)+z_k(x,t)\in \text{int }(K_{\gamma,p(x,t)}^{co})$ for every $(x,t)\in\T^2\times[0,T]$,
\item[(b)] $z_k(\cdot,0)\rightharpoonup 0$ in $L^2(\T^2)$,
\item[(c)] $\int_{\T^2} |z_k(x,0)|^2\, dx \geq C|B_{r_0}(x_0)|\left(d(z(x_0,0),K_{\gamma,p(x_0,0)})\right)^2$
\end{itemize}
with a constant $C>0$ independent of $x_0$, $r_0$ and $z$. A corresponding sequence can also be found for any $B_r(x_0)$ with $r<r_0$. Furthermore, $z_k$ are independent of $\mu$ by the above considerations.


Now we proceed by a standard covering argument. We may find finitely many disjoint balls $B_i:=B_{r_i}(x_i)$, $i=1,\ldots, I$, and associated sequences $z_k^i$ solving \eqref{eq:linear_system} and satisfying (a)-(c) for each $i,k$, as well as
$$\supp(z_k^i)\subset\T^2\times[0,1/k),$$
and
\begin{align}\label{eq:balld}\int_{\T^2} \left(d(z(x,0),K_{\gamma,p(x,0)})\right)^2\, dx \leq 2\sum_{i=1}^I |B_i| \left(d(z(x_i,0),K_{\gamma,p(x_i,0)})\right)^2.
\end{align}
We define $$z_j:=\sum_{i=1}^I z^i_j,
$$
then it is easy to see that (i)-(iv) are satisfied.
Finally, from (c) and \eqref{eq:balld} it follows that
$$\int_{\T^2} |z_j(x,0)|^2\, dx \geq \frac{C}{2}\int_{\T^2} \left(d(z(x,0),K_{\gamma,p(x,0)})\right)^2\, dx. $$
\end{proof}

\begin{remark}\label{rem:in4}
Note that for any $j\geq 0$ we obtain a new family $(z^\mu+z_j,p^\mu)_{\mu\in B_{\delta,-}}$ having the properties stated in Lemma \ref{lem:in3} except for \eqref{eq:nsclose}, which has not been used in the proof of Lemma \ref{lem:in1}. In consequence Lemma \ref{lem:in1} can also be applied to all these new families.
\end{remark}

\textbf{Step 3.} The next Lemma essentially shows that with any initial data belonging to a subsolution which is turbulent, i.e. not an Euler state, at initial time, one may associate an initial data belonging to a subsolution which is an Euler state at initial time, and the square of the $L^2$-norm of the difference of the corresponding initial velocities is controlled by the deviation to an Euler state of the first initial data.


\begin{lemma}\label{lem:in2}
Let $(z(\cdot,0),p(\cdot,0))$, $\gamma>0$ be as given by Lemma \ref{lem:in3} (independent of any chosen dissipation measure $\mu\in B_{\delta,-}$), and set
$(w,f):=(v,e)(\cdot,0)$. Then there exists  $(\bar w,\bar f)\in L^2(\T^2;\R^3)$ such that for any $\mu\in B_{\delta,-}$ there exists a function $\bar z^\mu\in \cC^0(\T^2\times(0,T])$ solving \eqref{eq:linear_system_with_p} with pressure $p^\mu$, dissipation $\mu$ and initial data $(\bar v^\mu,\bar e^\mu)(\cdot,0)=(\bar w,\bar f)$, such that $\bar z^\mu(x,t)\in\text{int }( K_{\gamma,p^\mu(x,t)}^{co})$ for $(x,t)\in\T^2\times(0,T]$, $\divv \bar w =0,$
$\bar f=\frac{1}{2}|\bar w|^2$ a.e. in $\T^2$, and
$$\|w-\bar{w}\|_{L^2(\T^2)}^2\leq 9\int_{\T^2} f(x)-\frac{1}{2}|w(x)|^2 \, dx.
$$
\end{lemma}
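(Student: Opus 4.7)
The plan is to iteratively apply Lemma \ref{lem:in1} at the initial-time slice in order to drive the ``Euler defect''
$$
I_n := \int_{\T^2}\bigl(e_n(x,0) - \tfrac{1}{2}|v_n(x,0)|^2\bigr)dx
$$
to zero, producing a subsolution on $\T^2\times(0,T]$ whose trace at $t=0$ is a genuine Euler state. Set $z_0^\mu := z^\mu$ and inductively apply Lemma \ref{lem:in1} (via Remark \ref{rem:in4}) to $z_{n-1}^\mu$ to obtain a perturbation $\zeta_n \in \cC^0(\T^2\times[0,1/n))$, independent of $\mu$, and set $z_n^\mu := z_{n-1}^\mu + \zeta_n$. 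Since each $\zeta_n$ solves the homogeneous system \eqref{eq:linear_system}, the pressure $p^\mu$ and the dissipation $\mu$ are preserved, and property (ii) of Lemma \ref{lem:in1} keeps $z_n^\mu(x,t) \in \text{int}(K_{\gamma,p^\mu(x,t)}^{co})$ throughout. By induction the initial data $(v_n(\cdot,0), e_n(\cdot,0))$ is also $\mu$-independent.

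Two observations drive the convergence. First, integrating the third equation of \eqref{eq:linear_system} for $\zeta_n$ against the constant test function and using its compact time support gives $\int_{\T^2}\tilde e_n(x,0)\,dx = 0$; combined with the expansion of $\tfrac12|v_{n-1}+\tilde v_n|^2$ this yields
\begin{equation}\label{eq:Idec}
I_n = I_{n-1} - \int_{\T^2} v_{n-1}\cdot\tilde v_n\,dx - \tfrac{1}{2}\|\tilde v_n(\cdot,0)\|_{L^2}^2.
\end{equation}
Second, by Remark \ref{rem:plane_waves_at_time_0} the localized plane-wave building blocks of Lemma \ref{lem:locpw} tend weakly to $0$ in $L^2(\T^2)$ at $t=0$ as the oscillation frequency grows, so (writing $I_0 := \int_{\T^2}(f-\tfrac12|w|^2)\,dx$) the frequency underlying $\zeta_n$ can be selected large enough relative to $z_0^\mu,\ldots,z_{n-1}^\mu$ and $w$ that
$$
\Bigl|\int_{\T^2}v_{n-1}\cdot\tilde v_n\,dx\Bigr| + \Bigl|\int_{\T^2}w\cdot\tilde v_n\,dx\Bigr| \le 2^{-n}I_0.
$$

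Estimate (v) of Lemma \ref{lem:in1}, together with the pointwise inequality
$$
d\bigl(z_{n-1}(x,0),K_{\gamma,p(x,0)}\bigr) \ge c\,\bigl(e_{n-1}(x,0)-\tfrac{1}{2}|v_{n-1}(x,0)|^2\bigr),
$$
obtained from $e-\tfrac12|v|^2 = (e-e') - \tfrac12(v-v')\cdot(v+v')$ for any $z'\in K_{\gamma,p(x,0)}$ and $|v+v'|\le 2\sqrt{2\gamma}$, Cauchy--Schwarz in $x$, and the fact that along the $\Lambda$-directions produced by Lemma \ref{lem:big_fkn_cone} the velocity component $\bar v$ is comparable in magnitude to the full $\bar z$ (since $|\bar e|, |\bar m|, |\bar\sigma|$ are bounded by $C(\gamma,\|p\|_\infty)|\bar v|$), yield $\|\tilde v_n(\cdot,0)\|_{L^2}^2 \ge c' I_{n-1}^2$. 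Substituting into \eqref{eq:Idec} forces $I_n\to 0$. Telescoping \eqref{eq:Idec} over $n$ produces the identity
$$
\tfrac{1}{2}\Bigl\|\sum_{k=1}^n \tilde v_k(\cdot,0)\Bigr\|_{L^2}^2 = I_0 - I_n - \sum_{k=1}^n \int_{\T^2} w\cdot\tilde v_k\,dx,
$$
so with $S_n := \sum_{k=1}^n \tilde v_k(\cdot,0)$ we obtain $\|S_n\|_{L^2}^2 \le 2I_0 + 2I_0 = 4I_0 \le 9I_0$. The same frequency-selection argument gives Cauchyness of $S_n$ and of the partial sums $\sum \tilde e_k(\cdot,0)$ in $L^2$, producing strong limits $\bar W$, $\bar F$; set $\bar w := w+\bar W$, $\bar f := f+\bar F$. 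Divergence-freeness of $\bar w$ is inherited in the limit, and the combination $f_n-\tfrac12|v_n|^2\ge 0$, $I_n\to 0$, and strong $L^2$-convergence force $\bar f = \tfrac{1}{2}|\bar w|^2$ a.e.

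Finally define $\bar z^\mu(x,t) := z_n^\mu(x,t)$ for any $n$ with $1/n \le t$; since $\zeta_{n+1}$ vanishes on $\{t\ge 1/(n+1)\}$, this is independent of $n$, continuous on $\T^2\times(0,T]$, and satisfies $\bar z^\mu(x,t) \in \text{int}(K_{\gamma,p^\mu(x,t)}^{co})$. Passing to the limit $n\to\infty$ in the weak form of \eqref{eq:linear_system_with_p} for $z_n^\mu$ is straightforward: interior contributions are eventually constant on every slab $\T^2\times[t_0,T]$, while the initial-data term converges by strong $L^2$-convergence of $(v_n(\cdot,0),e_n(\cdot,0))$ to $(\bar w,\bar f)$; this produces a weak solution of \eqref{eq:linear_system_with_p} with pressure $p^\mu$, dissipation $\mu$, and initial data $(\bar w,\bar f)$. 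The main technical obstacle is the diagonal choice of oscillation frequencies $N_n$: they must be large with respect to finitely many previously fixed functions in order to control both the inner products in \eqref{eq:Idec} and the cross inner products responsible for Cauchyness of $S_n$—a standard but careful bookkeeping step, made $\mu$-independent by the fact that $w$, $z_n(\cdot,0)$ and the perturbations $\zeta_n$ are all $\mu$-independent by induction.
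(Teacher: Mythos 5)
Your proposal follows essentially the same iterative strategy as the paper's proof: both repeatedly apply Lemma \ref{lem:in1} (via Remark \ref{rem:in4}), choose the oscillation frequencies of the new perturbation large relative to everything constructed so far (so that, at $t=0$, the cross inner products are negligible, the norm is almost additive, and the shrinking time supports let the interior contributions stabilize), and pass to the strong $L^2$ limit of $z_k(\cdot,0)$. The resulting structure --- $\mu$-independence of the initial trace, gluing $\bar z^\mu$ slab by slab on $\T^2\times(0,T]$, continuity of the map $p\mapsto K_{\gamma,p}$, etc. --- is the same.

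Where you diverge is in how you extract $\bar f=\tfrac12|\bar w|^2$. The paper uses estimate (v) of Lemma \ref{lem:in1} directly: strong convergence of $z_k(\cdot,0)$ forces $\|z_{k+1}(\cdot,0)-z_k(\cdot,0)\|_2\to 0$, which by \eqref{eq:itdis} drives $\int d(z_k(x,0),K_{\gamma,p(x,0)})^2\,dx\to 0$, hence $\bar z(\cdot,0)\in K_{\gamma,p(\cdot,0)}$ a.e. You instead run a recursion for the scalar defect $I_n$ and argue $I_n\to 0$, which requires two auxiliary ingredients that the paper does not need: (a) a pointwise bound of the form $e-\tfrac12|v|^2\le C(\gamma)\,d(z,K_{\gamma,p})$ (valid, because $e\le\gamma$ on the convex hull bounds $|v+v'|$), and, more delicately, (b) the claim that the velocity component $\bar v$ of any $\Lambda$-direction used in Lemma \ref{lem:in1} satisfies $|\bar z|\le C(\gamma,\|p\|_\infty)|\bar v|$. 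Point (b) is indeed true for $\bar z = z_2-z_1$ with $z_1,z_2\in K_{\gamma,p}$, and the directions used via Corollary \ref{cor:seg} are scalar multiples of such differences; but to turn this into the $L^2$ lower bound $\|\tilde v_n(\cdot,0)\|_2^2\gtrsim\|\tilde z_n(\cdot,0)\|_2^2$ one has to track this through the localized plane-wave construction of Lemma \ref{lem:locpw}, where only $d(z_N,[-\bar z,\bar z])\to 0$ uniformly is asserted; this is doable (the plane wave is $S''\cdot\bar z$ up to a uniformly small error), but it is an extra step that you leave implicit. In exchange, your telescoped identity gives $\|w-\bar w\|_2^2\le 4 I_0$, a slightly better constant than the paper's $9 I_0$. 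One more bookkeeping point: Cauchyness of $S_n$ and of the other components requires controlling all inner products $\int v_m\cdot\tilde v_n$ for $m<n-1$ (not just $m=n-1$ and $\int w\cdot\tilde v_n$), together with the summability $\sum_k\|\tilde v_k\|_2^2<\infty$; you acknowledge this ``careful bookkeeping'' and both follow from the same diagonal frequency selection, so this is not a gap, only a detail worth writing out. Overall your argument is sound and of comparable complexity; the paper's route via \eqref{eq:itdis} is cleaner because it avoids the comparison (b) between $\bar v$ and $\bar z$.
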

\begin{proof}
Let $\nu:=\int_{\T^2} f(x)-\frac{1}{2}|w(x)|^2$. Note that $\nu>0$, because otherwise there holds $z(x,0)\in \partial K^{co}_{\gamma,p(x,0)}$ in contradiction with the fact that the subsolutions $z^\mu$ given by Lemma \ref{lem:in3} are turbulent at initial time.

We construct the following sequences recursively. Set $(w_0,f_0):=(w,f)$ and let $(z_0^\mu,p_0^\mu)_{\mu\in B_{\delta,-}}$ be the family of subsolutions from Lemma \ref{lem:in3}. We recall that $z_0(\cdot,0)$ is independent of $\mu$, as in the previous lemma.

Then, given the corresponding family for $k\geq 0$, one may use Lemma \ref{lem:in1}, cf. Remark \ref{rem:in4}, to deduce the existence of a sequence of subsolutions $(z_k^\mu+z_{k,j})_{j\geq 0}$ such that $z_{k,j}(\cdot,0)\rightharpoonup 0$ in $L^2(\T^2)$ as $j\to+\infty$. Using the identity
\begin{align*}\|z_{k,j}(\cdot,0)\|_2^2+&\|z_{k}(\cdot,0)\|_2^2 -\|z_k(\cdot,0)+z_{k,j}(\cdot,0)\|_2^2 =-2\langle z_{k,j}(\cdot,0) ,z_{k}(\cdot,0) \rangle \to 0,
\end{align*}
one may define $z^\mu_{k+1}:=z_k^\mu+z_{k,j}$, $(w_{k+1},f_{k+1}):=(w_k+v_{k,j}(\cdot,0),f_k+e_{k,j}(\cdot,0))$ for a $j>0$ large enough such that one has
\begin{gather}
\|z_{k+1}(\cdot,0)-z_{k}(\cdot,0)\|_2^2\leq \|z_{k+1}(\cdot,0)\|_2^2-\|z_{k}(\cdot,0)\|_2^2+2^{-k}\label{eq:itseq1},\\
\|w_{k+1}-w_{k}\|_2^2\leq\|w_{k+1}\|_2^2 -\|w_{k}\|_2^2+\nu 2^{-k},\label{eq:itseq2}\\ \left|\int_{\T^2} f_{k+1}(x)-f_k(x) \, dx\right|\leq  \nu 2^{-k}\label{eq:itseq3},
\end{gather}
and in addition, due to Lemma \ref{lem:in1}, one also has
\begin{align}
    \int_{\T^2} |z_{k+1}(\cdot,0)(x)-z_{k}(\cdot,0)|^2\, dx &\geq C\int_{\T^2} \left(d(z_k(x,0),K_{\gamma,p(x,0)})\right)^2\, dx,\label{eq:itdis}\\ \supp(z^\mu_{k+1}-z^\mu_k)&\subset\T^2\times[0,1/k).\label{eq:itsup}
\end{align}
From \eqref{eq:itseq1} it follows that for any $l>k$ one has
\begin{align*}
   \|z_{l}(\cdot,0)-z_{k}(\cdot,0)\|_2^2\leq 2 \left(\|z_{l}(\cdot,0)\|_2^2 -\|z_{k}(\cdot,0)\|_2^2+ 2^{-k+1}\right).
\end{align*}
By the weak convergence of $(z_{k,j})_j$ we may in addition assume that $\|z_{k}(\cdot,0)\|_2$ is increasing (up to a sequence converging to $0$), and since it is bounded due to $z_k(\cdot,0)\in K_{\gamma,p(\cdot,0)}^{co}$, it has to converge. Consequently, $z_k(\cdot,0)$ is a Cauchy sequence in $L^2(\T^2)$ and it converges strongly to some $\bar \zeta=(\bar w,\bar u, \bar S,\bar f)\in L^2(\T^2;Z)$, which is once more independent of $\mu$, since  $z_k(\cdot,0)$ were also independent of $\mu$.

Using \eqref{eq:itsup}, it follows that for any $\mu\in B_{\delta,-}$ there exists $\bar z^\mu\in \cC^0(\T^2\times (0,T];Z)$ such that
$$z^\mu_k\to \bar z^\mu\text{ in }\cC^0_{loc}(\T^2\times (0,T];Z).$$
Hence, $\bar z^\mu$ solves \eqref{eq:linear_system_with_p} w.r.t. initial data $(\bar v^\mu(\cdot,0),\bar e^\mu(\cdot,0))=(\bar w,\bar f)$, pressure $p^\mu$ and dissipation measure $\mu$. Moreover, $\bar z^\mu(x,t)\in\text{int }( K_{\gamma,p^\mu(x,t)}^{co})$ for $(x,t)\in\T^2\times(0,T]$. From \eqref{eq:itdis} it follows that $d(\bar z (\cdot,0),K_{\gamma,p(\cdot,0)})=0$ a.e., therefore $\frac{1}{2}|\bar w|^2=\bar f$ a.e. in $\T^2$.

Finally, from \eqref{eq:itseq2} and  \eqref{eq:itseq3}  one obtains that
\[
     \|\bar{w}-w\|_2^2\leq 2 \left(\|\bar w\|_2^2 -\|w\|_2^2\right)+\nu,\quad \left|\int_{\T^2}\bar f(x)-f(x) \, dx\right|\leq \nu,
\]     
which implies
\begin{align*}
     \|\bar{w}-w\|_2^2&\leq 4 \int_{\T^2}\bar  f(x)-\frac{1}{2}|w(x)|^2 \, dx+\nu \leq 4 \int_{\T^2} f(x)-\frac{1}{2}|w(x)|^2 \, dx +5\nu \\&= 9 \int_{\T^2} f(x)-\frac{1}{2}|w(x)|^2 \, dx.
\end{align*}
Since $(\bar w,\bar f)$ is independent of $\mu$ by construction, this finishes the proof of Lemma \ref{lem:in2}.
\end{proof}

\textbf{Conclusion.} Observe that, by virtue of Theorem \ref{thm:abstract_CI_theorem}, the function $\bar w$
given by Lemma \ref{lem:in2} above is an initial data having the property that for any dissipation measure $\mu\in B_{\delta,-}$ there exists a corresponding subsolution. Note also that all associated subsolutions have $\mathscr{U}=\T^2\times (0,T]$ as their turbulent zone.

\begin{proof}[Proof of Theorem \ref{thm:dense}] Now we are in a position to finish the proof of Theorem \ref{thm:dense}.
Let $\delta>0$ and $w\in L^2(\T^2)$ with $\divv w=0$, we apply Lemma \ref{lem:in3} to deduce for any $\mu\in B_{\delta,-}$ the existence of $p^\mu\in \cC^0(\T^2\times [0,T])$ and a subsolution $z^\mu$ which satisfy \eqref{eq:nsclose} and $z^\mu(x,t)\in\text{int }(K_{\gamma,p^\mu(x,t)}^{co})\text{ for }(x,t)\in\T^2\times[0,T]$ for some $\gamma>0$. 

We then have $(z^\mu(\cdot,0),p^\mu(\cdot,0))$ independent of $\mu$, and by Lemma \ref{lem:in2} there exists wild initial data $\bar w\in L^2(\T^2)$ and for each $\mu\in B_{\delta,-}$ an associated subsolution $\bar{z}^\mu$ with turbulent zone $\T^2\times(0,T]$. Furthermore there holds
$$\|v(\cdot,0)-\bar{w}\|_2^2\leq 9\int_{\T^2} e(x,0)-\frac{1}{2}|v(x,0)|^2 \, dx.$$
Hence, we may use \eqref{eq:nsclose} to conclude that
\begin{align*}
    \|w-\bar{w}\|_2^2&\leq 2\|v(\cdot,0)-\bar{w}\|_2^2+2\|v(\cdot,0)-w\|_2^2\\ &\leq 18\int_{\T^2} e(x,0)-\frac{1}{2}|v(x,0)|^2 \, dx + 2\delta \leq 20 \delta.
\end{align*}
Since $\delta>0$ was arbitrary, this concludes the proof of Theorem \ref{thm:dense}.
\end{proof}





%

%

\vspace{30pt}
\noindent Mathematisches Institut,  Universit\"at Leipzig,  Augustusplatz 10, D-04109 Leipzig \\
\texttt{bjoern.gebhard@math.uni-leipzig.de}\\
\texttt{jozsef.kolumban@math.uni-leipzig.de}
%
\end{document}